\theoremstyle{plain}
\newtheorem{lem}{Lemma}[section]
\newtheorem{thm}{Theorem}[section]
\newtheorem{prop}{Proposition}[section]
\newtheorem{cor}{Corollary}[section]
\theoremstyle{definition}
\newtheorem{rem}{Remark}
\theoremstyle{remark}
 \newcommand{\calC}{\mathcal C} 
 \newcommand{\calF}{\mathcal F}
\newcommand{\calN}{\mathcal N} \newcommand{\calO}{\mathcal O}
 \newcommand{\mC}{\mathbb C} 
 \newcommand{\mF}{\mathbb F}
  \newcommand{\mP}{\mathbb P}
\newcommand{\mQ}{\mathbb Q} \newcommand{\mR}{\mathbb R}
 \newcommand{\mbF}{\mathbf F}
\newcommand{\gra}{\alpha} \newcommand{\grb}{\beta}       
      \newcommand{\grs}{\sigma}
\newcommand{\grf}{\varphi}\newcommand{\gro}{\omega}      
\newcommand{\grt} {\theta}
\newcommand{\vuoto}      {\varnothing}
\newcommand{\cech}       {\vee}
\newcommand{\comp}       {\!\circ\!}
\newcommand{\senza}      {\smallsetminus}
\newcommand{\ol}         {\overline}
\newcommand{\wt}         {\widetilde}
\begin{document}

\title{EFFECTIVE AND BIG DIVISORS ON A PROJECTIVE SYMMETRIC VARIETY}

\author[Alessandro Ruzzi]{Alessandro Ruzzi}


\curraddr{Dipartimento di Matematica ``Guido
  Castelnuovo"\\ ``Sapienza" Universit\`a di Roma\\ Piazzale Aldo Moro
  5\\ 00185 Roma, Italy}

\email{ruzzi@mat.uniroma1.it; alessandroruzzi78@gmail.com}

\maketitle


%
%
%
%
%

\begin{abstract}
We describe the effective and the big cones of a projective
symmetric variety. Moreover, we give a  necessary and sufficient
combinatorial criterion for the bigness of a nef divisor  on a
projective symmetric variety. When  the
divisor is $G$-stable, such a criterion has an explicit geometric
interpretation. Finally, we describe the spherical closure of a
symmetric subgroup.\\ \textit{keywords}: Symmetric varieties, Big divisors.\\
 Mathematics Subject Classification 2000: 14L30, 14C20,
(14M17)
\end{abstract}

\

\section*{Introduction}\label{sez: intro}

Brion gave a description of the Picard group of a spherical variety
in \cite{Br89}. He also found necessary and sufficient conditions
for the ampleness and global generation of a line bundle. From these
conditions follows that a line bundle is nef if and only if it is
globally generated. It is natural to ask what are the conditions on
a line bundle to be effective, respectively big. It is known that the
effective cone is closed, polyhedral and, if the variety is
$\mathbb{Q}$-factorial, generated by the classes of the $B$-stable
prime divisors. But in general it is hard to say which are the
$B$-stable prime divisors whose classes generate an extremal ray of
the effective cone. In the very special case of  projective
homogeneous varieties, the big cone coincides with the ample cone.
More generally, the case of wonderful varieties  is studied in
\cite{Br07}. For any normal projective variety, there is a very useful criterion for a nef divisor $D$ to be big: $D$ is big if
and only if its volume $D^{dimX}$ is strictly positive (see \cite{La04}, Theorem 2.2.16). In the case of a toric $T$-variety $Z$ this criterion has a more combinatorial version.
A $T$-stable,  nef  divisor $D$ can be identified with a certain polytope with integral vertices, its moment polytope; moreover,  the volume of $D$ is equal to $(dim\, Z)!$ times the volume of this polytope. In particular, $D$ is big if and only if its moment polytope has positive volume.  See also
\cite{FS} for a partial study of the big cone of some toric
varieties.

We are interested to study the bigness of Cartier
$\mathbb{Q}$-divisors on symmetric varieties (over which acts a
semisimple group). First, we describe explicitly the effective cone;
we determine also when the classes of two $B$-stable prime divisors
are proportional. When the variety is $\mathbb{Q}$-factorial,  we
find the conditions for the class of a $B$-stable prime divisor to
generate an extremal ray of the effective cone   (see Theorem
\ref{eff cone1}, Theorem \ref{eff cone2}  and Corollary \ref{eff cone
of singular X}).

Given a $\mQ$-factorial, projective symmetric variety $X$, we will give an explicit description of its big cone using the description of $Eff(X)$ and the fact the $Big(X)$ is the interior part of $Eff(X)$ (see Theorem \ref{bigg cone}). We will give also a combinatorial version of the cited criterion on the volume of a nef  $\mathbb{Q}$-divisor $D$, stated in terms of the $T$-weights of the fibres of $\calO(D)$ over the $B$-stable points (see Theorem \ref{bigness e nef}). The idea of the proof is the following.
Up to take a lifting and up to linear equivalence, one can reduce itself to study $G$-stable divisors on
a projective toroidal symmetric variety, i.e. a projective variety
such that the closure of none $B$-stable divisor of the open
$G$-orbit contains a $G$-orbit. Each projective toroidal symmetric
variety $X$ (with fixed open $G$-orbit) contains a projective toric
variety $Z^{c}$ which determines $X$ uniquely.  We prove that the
restriction  of a big  $\mathbb{Q}$-divisor $D$ of $X$ to
$Z^{c}$ is always big. If the divisor is $G$-stable, then this
condition is also sufficient (see Proposition \ref{bigness of toroidal}). One can show that the subspace of
$Pic(X)_{\mathbb{Q}}$ generated by the classes of $G$-stable
divisors is a complement to the kernel of the restriction
$Pic(X)_{\mathbb{Q}}\rightarrow Pic(Z^{c})_{\mathbb{Q}}$. Then, we
use the combinatorial description of
$H^{0}(Z^{c},\mathcal{O}(D)|Z^{c})$ to prove Theorem \ref{bigness e nef}.

We  describe also the spherical closure of a symmetric
subgroup $H$ in Proposition \ref{spherical closure}. We will use such description to give a condition so that the class of a $G$-stable prime divisor is proportional to the class of a $B$-stable, but not $G$-stable, divisor (see Corollary \ref{compat spherical closure} and Theorem \ref{eff cone2}). Given a
symmetric space $G/H$, $H$ is called a symmetric subgroup and its
spherical closure is defined, after Luna, as  the subgroup $\ol{H}^{sph}$ of
$N_{G}(H)$ which stabilizes all the $B$-stable prime divisors of
$G/H$. A subgroup $H$ of $G$ is wonderful (resp. spherical) if $G/H$ has a wonderful compactification (resp. is spherical). The definition of spherical closure is useful to associate in a natural way  a wonderful subgroup of $G$ to any spherical subgroup of $G$.   Moreover, $N_G(H)/H$ is isomorphic to the group of $G$-equivariant automorphisms of $G/H$, so $\ol{H}^{sph}$ can be thought as a group of automorphisms.

\section{Notation}\label{sez: notation}

In this section we  introduce the necessary  notations. The reader
interested to the embedding theory of spherical varieties can see
\cite{LV}, \cite{Kn91}, \cite{Br97} or \cite{T06}. In \cite{V90},
this theory is explained in the particular case of  symmetric
varieties.

\subsection{First definitions}\label{sez: 1def}

Let $G$ be a connected semisimple algebraic group over an algebraic closed field  \textbf{k} of characteristic zero
and let $\theta$ be an involution of $G$. Let $H$ be a closed
subgroup of $G$ such that $G^{\theta}\subset H\subset
N_{G}(G^{\theta})$, then we say that   $G/H$ is a symmetric space
and that $H$ is a symmetric subgroup. We can assume $G$ simply
connected (see \cite{V90}, \S2.1). An equivariant embedding of $G/H$ is the data of a
$G$-variety $X$ together with an equivariant open embedding
$\varphi:G/H\hookrightarrow X$, in particular $\varphi(g\cdot
x)=g\cdot\varphi(x)$ for each $x\in G/H$. A normal $G$-variety is
called a spherical variety if it contains a dense orbit under the
action of an arbitrarily chosen Borel subgroup of $G$. One can show
that every normal equivariant embedding of $G/H$ is  spherical (see
\cite{dCP1}, Proposition 1.3); we call it a symmetric variety.   The most important example of symmetric space is the one of a semisimple (or more generally reductive) group $G$ seen as $(G\times G)$-variety.

We
say that a subtorus of $G$ is split if $\theta(t)=t^{-1}$ for all
its elements $t$; moreover it is a maximal split torus if  has
maximal dimension. We say that any maximal torus containing a
maximal split torus is maximally split. Any maximally split torus is
$\theta$ stable; moreover they are all conjugate under $G^{\theta}$ (see \cite{Vu74}, Proposition 2 (iv) and Proposition 6). We fix
arbitrarily a maximally split torus  $T$ containing a maximal split
torus $T^{1}$. Let $R_{G}$ be the root system of $G$ with respect to
$T$. We can choose a Borel subgroup $B$ containing $T$ such that,
given any positive root $\alpha$ with respect to $B$, either
$\theta(\alpha)=\alpha$ or $\theta(\alpha)$ is negative. Moreover,
$BH$ is dense in $G$ (see \cite{dCP1}, Lemma 1.2 and Proposition
1.3).

\subsection{Colored fans}\label{sez: colored fans}

We want to describe the Picard group of a symmetric variety. Before
doing this, we need to introduce some details about the
classification of the symmetric varieties by their colored fans
(this classification holds more generally for spherical varieties).

Let $D(G/H)$ be the set of colors of $G/H$, namely the set of
$B$-stable prime divisors of $G/H$. They are the irreducible
components of $(G/H)\senza (BH/H)$ because $BH/H$ is affine
and open. We say that a spherical variety is simple if it contains
one closed orbit. Let $X$ be a simple symmetric variety with closed
orbit $Y$. We define the set of colors of $X$ as the subset $\calF(X)$
of $D(G/H)$ consisting of the colors whose closure in $X$ contains
$Y$.  To each prime divisor $D$ of $X$, we can associate the
normalized discrete valuation $v_{D}$ of $\mathbb{C}(G/H)$ whose
ring is the local ring $\mathcal{O}_{X,D}$. One can prove that $D$
is $G$-stable if and only if $v_{D}$ is $G$-invariant, i.e.
$v_{D}(g\cdot f)=v_{D}(f)$ for each $g\in G$ and $f\in
\mathbb{C}(G/H)$. Let $\calN$ be the  set of all $G$-invariant
valuations of $\mathbb{C}(G/H)$ taking values in $\mathbb{Z}$ and
let $\calN(X)$ be the set of the valuations associated to the $G$-stable
prime divisors of $X$. Observe that each irreducible component  of
$X\senza (G/H)$ has codimension one, because $G/H$ is
affine. Let $S:=T/\,T\cap H\simeq T\cdot (H/H)$. One can show that
the group $\mathbb{C}(G/H)^{(B)}/\,\mathbb{C}^{*}$ is isomorphic to
the character group $\chi(S)$ of $S$ (see \cite{V90}, \S2.3); in
particular, it is a free abelian group. We define the \textit{rank} of $G/H$
as the dimension  of $S$. We can identify the dual group
$Hom_{\mathbb{Z}}(\mathbb{C}(G/H)^{(B)}/\,\mathbb{C}^{*},\mathbb{Z})$
with the group  $\chi_{*}(S)$ of one-parameter subgroups of $S$.
The  restriction map of valuations to $\mathbb{C}(G/H)^{(B)}/\mathbb{C}^{*}$ is
injective over $\calN$ (see \cite{Kn91}, Corollary 1.8), so we can
identify $\calN$ with a subset of $\chi_{*}(S)$.
We say
that $\calN$ is the valuation monoid of $G/H$. Observe that $\calN$ is
stable under addition (see, for example \cite{Kn91}, Lemma 5.1).
For each color $F$, we define $\rho(F)$ as the restriction of
$v_{F}$ to $\chi(S)$. In general, the map
$\rho:D(G/H)\rightarrow\chi_{*}(S)_{ \mathbb{R}}$ is not injective.
Let $\calC(X)$ be the cone in $\chi_{*}(S)_{ \mathbb{R}}$ generated by
$\calN(X)$ and $\rho(\calF(X))$.
A simple symmetric variety is uniquely determined by its colored
cone $(\calC(X),\calF(X))$ (see \cite{Kn91}, Theorem 3.1).

Let $Y$ be a $G$-orbit of a symmetric variety  $X$. The set $\{x\in
X\ |\  \overline{G\cdot x}\supset Y\}$ is an open simple
$G$-subvariety of $X$ with closed orbit $Y$, because any spherical
variety contains only finitely many  $G$-orbits. Let $\{X_{i}\}$ be
the set of open simple subvarieties of $X$ and define the set of
colors of $X$, $\calF(X)$, as $\bigcup_{i\in I}\calF(X_{i})$. The family
$\mbF(X):=\{(\calC(X_{i}),\calF(X_{i}))\}_{i\in I}$ is \vspace{0.2 mm} called the
colored fan of $X$ and determines completely $X$ (see \cite{Kn91},
Theorem 3.3). Moreover $X$ is complete if and only if $\calN$ is
contained in the support $\bigcup_{i\in I}\calC(X_{i}) $ of $\mbF(X)$  (see \cite{Kn91}, Theorem 4.2).

If one allows $G$ to be reductive, then the toric varieties are a
special case of symmetric varieties.  If $X$ is a toric variety,
then $D(G/H)$ is empty and we need only to consider the fan
$\mbF^f(X)=\{(\calC(X_{i}))\}_{i\in I}$ associated to the colored fan of $X$
(actually the classification of spherical varieties by colored fans
is a generalization of the classification of toric varieties by
fans). One can show that, fixed any symmetric space $G/H$ such that $\rho$ is injective (for example if $G/H$ is a group), the symmetric varieties with open orbit $G/H$ are classified by the fans $\mbF^f(X)=\{(\calC(X_{i}))\}_{i\in I}$.

\subsection{Restricted root system}\label{sez: restricted root system}

To describe  the sets $\calN$ and $\rho(D(G/H))$, we need to associate a
root system  to $G/H$. W can identify $\chi(T^{1} )_{\mathbb{R}}$
with $\chi(S)_{\mathbb{R}}$ because  $\chi(S)$ has finite index in
$\chi(T^{1})$. We call again $\theta$ the involution induced on
$\chi(T)_{\mathbb{R}}$. The inclusion $T^{1}\subset T$ induces an
isomorphism of $\chi(T^{1})_{\mathbb{R}}$ with the $(-1)$-eigenspace
of $\chi(T)_{\mathbb{R}}$ under the action of $\theta$ (see
\cite{T06}, \S 26). Denote  by $W_{G}$ the Weyl group of $G$ (with
respect to $T$). We can identify $\chi(T^{1})_{\mathbb{R}}$ with its
dual $\chi_{*}(T^{1})_{\mathbb{R}}$ by the restriction $(\, \cdot
,\cdot)$  of the Killing form to $\chi(T^{1})_{\mathbb{R}}$ . The
set $R_{G,\theta}:= \{\beta-\theta(\beta)\ |\ \beta\in
R_{G}\}\backslash\,\{0\}$ is a root system in $\chi(S)_{\mathbb{R}}$
(see \cite{V90}, \S 2.3 Lemme), which we call the restricted root
system of $(G,\theta)$; we call the non zero $\beta-\theta(\beta)$
the restricted roots. We denote by $\alpha_{1},...,\alpha_{s}$ the
elements of the basis
$\overline{R}_{G,\theta}:=\{\beta-\theta(\beta)\, |\, \beta\in
R_{G}$ simple$\}\,\backslash\, \{0\}$   of $R_{G,\theta}$. We denote
by $\{\alpha^{\vee}_{1} ,...,\alpha^{\vee}_{s}\}$ the dual basis of
the restricted coroot system $R^{\vee}_{G,\theta}$, i.e. the dual
root
system of $R_{G,\theta}$.
Let $C^{-}$ be the negative Weyl chamber of $R_{G,\grt}$ (in $\chi_*(S)_\mQ$).
Given a dominant weight $\lambda$ of $G$, we denote by $V(\lambda)$
the irreducible representation of highest weight $\lambda$. See
\cite{CM}, Theorem 2.3 or \cite{T06}, Proposition 26.4 for an
explicit description of the dominant weights of $R_{G,\theta}$. They
are  called spherical weights and are also dominant weights of
$R_{G}$. Let $W_{G,\theta }$ be the Weyl group of $R_{G,\theta}$; it
is isomorphic to $N_{H^{0}}(T_{1})/Z_{H^{0}}(T^{1})$ and to
$N_{G}(T_{1})/Z_{G}(T^{1})$ (see Proposition 26.2 in \cite{T06}).

\subsection{The sets $\calN$ and $D(G/H)$}\label{sez: N+Dg/h}

The set $\calN$ is equal to $C^{-}\cap\,\chi_{*}(S)$; in particular, it
consists of the lattice vectors of the rational, polyhedral, convex
cone $C^{-}=cone(\calN)$. The set $\rho(D(G/H))$ is equal to
$\{\alpha^{\vee}_{1} ,...,\alpha^{\vee}_{s}\}$ and, for each $i$,
the fibre $\rho^{-1}(\alpha_{i}^{\vee})$ contains  at most 2 colors.
In particular,  the number of colors of a symmetric space is at
least its rank. We say that $(G,\theta)$ is indecomposable if
any  normal, connected, $\theta$-stable subgroup of $G$ is trivial.
In this case the number of colors of $G/H$ is at most equal to rank
of $G/H$ plus one. If $|D(G/H)|>rank(G/H)$ and $(G,\theta)$ is
indecomposable, we have two possibilities: 1)
$G^{\theta}=H=N_{G}(G^{\theta})$; 2) $H$ is equal to $G^{\theta}$
and has index two in $N_{G}(G^{\theta})$. In the last case any
element of $N_{G}(G^{\theta})\backslash\, G^{\theta}$ exchanges  two
colors. Because of the simply-connectedness of $G$,  we can always write $G$
as a direct product $\prod G_{i}$ of $\theta$-stable, semisimple,
normal subgroup $G_{i}$ such that each $(G_{i},\theta)$ is
indecomposable. Moreover, the finite cover $G/G^{\theta}$ of $G/H$
is the direct product $\prod_{i\in I}G_{i}/G_{i}^{\theta}$.

We say that a simple restricted root $\alpha$ is exceptional if $\rho^{-1}(\alpha^{\vee})$ contains two colors and
$2\gra$ is a restricted root. Moreover, we say that a symmetric variety is exceptional if there is an exceptional root. If moreover $\grt$ is indecomposable, then $G^\grt=H=N_G(G^\grt)$. Furthermore, $\rho$ is injective if $H$ is semisimple.

Let $D(G/H)^{H}$ be the set of colors $F$ such that $\rho^{-1}(\rho(F))=\{F\}$. One can show that  $\rho^{-1}(\rho(F))=\{F\}$ if only if the equation of
$\pi^{-1}(F)$ in $G$  is $H$-invariant, where $\pi:G\rightarrow G/H$
is the projection. We denote by $F_{\alpha}$ the sum of the colors
in $\rho^{-1}(\alpha^{\vee})$. If $\alpha^{\vee}\notin\,
\rho(D(G/H)^{H})$, we write $\rho^{-1}(\alpha
^{\vee})=\{F^{+}_{\alpha},F_{\alpha}^{-}\}$, so
$F_{\alpha}=F^{+}_{\alpha}+F_{\alpha}^{-}$.

\subsection{Toroidal symmetric varieties}\label{sez: toroidal}

Before to describe the Picard group, we want to define a special
class of varieties. We say that a spherical variety is toroidal
if $\calF(X)=\vuoto$. There is a special toroidal completion of any
symmetric space $G/H$, because $N_{G}(H)/H$ is finite. This
completion, called the standard completion $X_{0}$, is the simple
symmetric variety associated to $(cone(\calN),\vuoto)$ and it is the
unique  simple completion of $G/H$ which dominates all the other
simple completions. When $X_{0}$ is smooth then it is a wonderful
variety (in the definition of Luna). In particular, $X_{0}$ is smooth
(and wonderful) if $H=N_{G}(G^{\theta})$. This case has been defined
and studied by De Concini and Procesi in \cite{dCP1}.

$X_{0}$ contains an affine toric $S$-variety $Z_{0}$, which is a
quotient of an affine space by a finite group. The toroidal
varieties are the symmetric varieties which dominate the standard
completion; they are in one-to-one correspondence with the $S$-toric
varieties which dominate  $Z_{0}$. The correspondence is
obtained in the following way. The open set $U:=X_{0}\senza
\bigcup_{D(G/H)}\overline{F}$ is a $B$-stable affine set; let $P$ be
its stabilizer. $U$ is $P$-isomorphic to $R_{u}P\times Z_{0}$, where
$R_{u}P=\prod_{\beta \succ0,\, \theta(\beta)\neq\beta}U_{\beta}$ is
the unipotent radical of $P$ and $dim\, Z_{0}=rank\, X_{0}$. To any
toroidal variety $X$ we associate the inverse image $Z$ of $Z_{0}$
by the projection $X\rightarrow X_{0}$. Moreover,  $X\senza
\bigcup_{D(G/H)}\overline{F}$ is $P$-stable and is $P$-isomorphic to
$R_{u}P\times Z$. The toroidal varieties are also in one-to-one
correspondence with a class of complete toric varieties in the
following way. To a symmetric variety variety $X$, we associate the
closure $Z^{c}$ of $Z$ in $X$; $Z^{c}$ is also the inverse image of
$Z_{0}^{c}$. The $T$-variety $Z^{c}$ can be covered by finitely many
$N_{G^{\theta}}(T^{1})$-translated of $Z$; thus $Z^{c}$ is a
$S$-toric variety, in particular it is normal. The fan of $Z$ is the
fan $\mbF^f(X)$ associated to the colored fan  $\mbF(X)$, while the fan of $Z^{c}$
consists of the translates of the cones of $Z$ by the Weyl group
$W_{G,\theta}$ of $R_{G,\theta}$.

Given a symmetric variety $X$ there is  a unique  minimal  toroidal
variety $X^{dec}$, called the decoloration of $X$, which dominates $X$. If $\mbF(X)=\{(\calC_{i},\calF_{i})\}_{i\in I}$,   the colored fan
of $X^{dec}$ is  $\{(\calC_{i}\cap cone(\calN),\vuoto)\}_{i\in I}$ .

\subsection{Big divisor}

Before to describe the Picard group of a symmetric variety, we define some general notions about the line bundle. The reader can see \cite{La04} for more details. Let $X$ be a (normal) projective algebraic variety
over an algebraically closed field of characteristic zero.  Let $CDiv(X)$ the group of Cartier divisors.  Given two Cartier divisors $D_1$ and $D_2$ on $X$, we say that they  are \textit{numerically equivalent} if $D_1\cdot C=D_1\cdot C$ for each curve $C$ on $X$ (here $\cdot$ is the intersection product). In such a case we say also that $\calO(D_1)$ and $\calO(D_2)$ are numerically equivalent.

We define two generalization of an ample bundle. By Nakai's criterion (see \cite{La04}, Theorem 1.2.23), a Cartier divisor $D$ is ample if and only if the intersection product $D^{dim\,Y}\cdot Y$ is strictly positive for each subvariety $Y$ (with $dim\, Y>0)$. A first generalization is obtained weakening such property. Indeed, we say that a Cartier divisor $D$ is \textit{nef} (or numerically effective) if   $D\cdot C\geq 0$ for each curve $C$ in $X$. Remark that in definition we have used only subvarieties of dimension one. But, by Kleiman's Theorem (see \cite{La04}, Theorem 1.4.9),  $D$ is nef if and only $D^{dim\,Y}\cdot Y\geq0$  for each subvariety $Y$ (with $dim\, Y>0)$.

To define  the second generalization of ample divisor  we need to define the Itaka dimension 
of a divisor. Given a Cartier divisor $D$, let $E(D):=\{m\geq0: H^0(X,\calO(mD))\neq0\}$; given any $m\in E(D)$ we have a rational map $\phi_m:X\dashrightarrow \mP(H^0(X,\calO(mD)))$. If $E(D)$ is empty we define the \textit{Itaka dimension}  $\kappa(D)$ of $D$ as $-\infty$. Otherwise we define $\kappa(D):=max_{m\in E(D)}\{dim\,\phi_m(X)\}$. Remarks that $\kappa(D)$ is equal at most to the dimension of $X$.
When $D$ is the canonical divisor, $\kappa(D)$ is also called the Kodaira dimension of $X$.
We  say that a Cartier divisor on $X$ is \textit{big} if and only if its Itaka dimension is equal to the dimension of $X$.
Clearly an ample divisor is big.

We have some equivalent conditions for the bigness of a divisor. First, we recall a lemma.

\begin{lem}[see \cite{La04}, Corollary 2.1.38] Let $D$ be a Cartier divisor on $X$ and let $\kappa=\kappa(D)$. Then there are strictly positive constants $a$ and $A$ such that
\[a\cdot m^\kappa\leq dim\, H^0(X,\calO(mD))\leq A\cdot m^\kappa\]
for all sufficiently large $m\in E(D)$.
\end{lem}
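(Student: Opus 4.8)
The plan is to establish the two inequalities separately, after a preliminary reduction. First I would dispose of the trivial case and exploit the semigroup structure of $E(D)$. Since a section of $\calO(mD)$ times a section of $\calO(m'D)$ is a section of $\calO((m+m')D)$, the set $E(D)$ is a subsemigroup of $\mN$ containing $0$; writing $e=\gcd E(D)$, the numerical semigroup $\frac1e E(D)$ has gcd $1$ and is therefore cofinite, so for all large $m$ one has $m\in E(D)$ if and only if $e\mid m$. Fixing a nonzero $s\in H^0(X,\calO(eD))$, multiplication by $s$ embeds $H^0(X,\calO(keD))$ into $H^0(X,\calO((k+1)eD))$, so $k\mapsto\dim H^0(X,\calO(keD))$ is nondecreasing. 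This monotonicity will let me deduce the bounds for every large $m\in E(D)$ once I have them along any arithmetic progression whose step is a multiple of $e$, at the cost of adjusting the constants $a$ and $A$.

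For the lower bound I would choose $p\in E(D)$ realizing the maximum in the definition of $\kappa$, that is $\dim\phi_p(X)=\kappa$ (automatically $e\mid p$). Passing to a birational model $\mu\colon X'\to X$ on which $|pD|$ has no base points, I obtain a morphism $f\colon X'\to W:=\ol{\phi_p(X)}$ with $\dim W=\kappa$ and a decomposition $\mu^{*}(pD)=M+F$ into moving and fixed part, where $M=f^{*}A$ for some ample $A$ on $W$. Since $X$ is normal, $\mu_{*}\calO_{X'}=\calO_{X}$, so $H^0(X,\calO(kpD))=H^0(X',\calO(k\mu^{*}(pD)))$, and pulling back sections from $W$ gives
\[\dim H^0(X,\calO(kpD))\ \geq\ \dim H^0(X',\calO(kM))\ \geq\ \dim H^0(W,\calO(kA)).\]
As $W$ is projective of dimension $\kappa$ and $A$ is ample, the last quantity agrees for large $k$ with the Hilbert polynomial of $(W,A)$, which has degree exactly $\kappa$ and positive leading coefficient; together with the monotonicity reduction this yields the desired lower bound.

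The upper bound is the substantive step, and the one I expect to be the main obstacle. The difficulty is that the linear systems $|kpD|$ need not be governed by a single morphism, so a priori $\dim H^0(X,\calO(kpD))$ could grow faster than $k^{\kappa}$. To control this I would appeal to the Iitaka fibration: after replacing $p$ by a suitably divisible multiple and $X$ by an appropriate birational model $X'$, the rational maps $\phi_{kp}$ stabilize to one fibration $f\colon X'\to W$ with connected fibres and $\dim W=\kappa$, with the property that the restriction of $pD$ to a general fibre $X'_{w}$ has Iitaka dimension $0$. Granting the case $\kappa=0$ --- where the relevant dimensions $\dim H^0$ stay bounded --- the fibrewise contribution is bounded independently of $k$, and pushing forward along $f$ bounds $\dim H^0(X,\calO(kpD))$ by a constant times $\dim H^0(W,\calO(kA))=O(k^{\kappa})$. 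Combining this with the lower bound and the monotonicity reduction of the first paragraph then produces constants $a,A>0$ with $a\,m^{\kappa}\leq\dim H^0(X,\calO(mD))\leq A\,m^{\kappa}$ for all sufficiently large $m\in E(D)$. The heart of the argument is thus the reduction to fibres of Iitaka dimension $0$; everything else is packaging together with the standard Hilbert-polynomial estimate.
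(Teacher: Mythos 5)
The paper contains no proof of this lemma at all: it is quoted as background, verbatim with a citation, from Lazarsfeld (Corollary 2.1.38), so there is no internal argument to compare yours against. Your architecture --- semigroup reduction, lower bound by pulling back sections from the image of $\phi_p$, upper bound by reducing via the Iitaka fibration to fibres of Iitaka dimension $0$ --- is exactly the route of the cited source, and the reduction and lower bound are essentially complete. One slip there: $e=\gcd E(D)$ need not itself lie in $E(D)$ (think of a semigroup generated by $2$ and $3$), so the nonzero section $s\in H^0(X,\calO(eD))$ you fix may not exist. The standard fix is to use only that $E(D)$ contains every sufficiently large multiple of $e$, and to compare $h^0(mD)$ with $h^0(kpD)$ only when $m-kp$ is a large multiple of $e$, hence lies in $E(D)$; this loses a bounded gap and only changes the constants.

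The genuine gap is the final step of the upper bound. From the Iitaka fibration $f\colon X'\to W$ you know that the restriction of $k\mu^*(pD)$ to a very general fibre has $h^0\le 1$ for every $k$, i.e.\ the torsion-free sheaf $f_*\calO_{X'}(k\mu^*(pD))$ has rank at most one. But ``fibrewise contribution bounded, then push forward'' does not by itself yield $\dim H^0(X',k\mu^*(pD))\le C\dim H^0(W,\calO(kA))$: a rank-one torsion-free sheaf on $W$ can have arbitrarily large $h^0$ (nothing you have said excludes that the pushforward is as positive as, say, $\calO_W(k^2B)$ with $B$ ample, whose $h^0$ grows like $k^{2\kappa}$). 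What is actually needed is a \emph{linear} bound on the positivity of the pushforward: writing $\mu^*(pD)=f^*A+F$ with $F$ effective, the projection formula gives $f_*\calO(k\mu^*(pD))=\calO_W(kA)\otimes f_*\calO(kF)$, and one must show $f_*\calO(kF)\hookrightarrow\calO_W(kF')$ for a single fixed divisor $F'$ on $W$. This is precisely where Iitaka dimension $0$ on the generic fibre enters in an essential way: since $H^0(X'_\eta,kF|_{X'_\eta})$ is one-dimensional, spanned by the canonical section, every section of $\calO(kF)$ over $f^{-1}(U)$ is the pullback of a rational function on $U$ times that section, so its pole order along any prime divisor $P\subset W$ is at most $k$ times a constant depending only on the multiplicities of $F$ and of $f^*P$, and only finitely many $P$ (independent of $k$) occur. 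With that, $h^0(X',k\mu^*(pD))\le h^0(W,k(A+F'))=O(k^\kappa)$ and your proof closes; without it, your final inequality is an assertion rather than a consequence of what precedes it.
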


\begin{prop}[see \cite{La04}, Lemma 2.2.3 and Corollary 2.2.7]\label{cns D big} Let $D$ be a Cartier divisor on $X$. The following conditions are equivalent:
\begin{itemize}
\item $D$ is big;
\item there is a constant $C>0$ such that $ dim\, H^0(X,\calO(mD))\geq C\cdot m^{\dim\, X}$ for all sufficiently large $m\in E(D)$;
\item for each ample divisor $A$, there is $m>0$ such that $mD-A$ is linearly equivalent to an effective divisor.
\item for each ample divisor $A$, there is $m>0$ such that $mD-A$ is numerically equivalent to an effective divisor.
\end{itemize}
\end{prop}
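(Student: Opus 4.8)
The final statement is Proposition \ref{cns D big}, which gives equivalent conditions for a Cartier divisor $D$ to be big. The plan is to prove the chain of implications among the four conditions, using the preceding Lemma (Corollary 2.1.38) as the main computational input.

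First I would establish the equivalence of bigness with the growth condition on sections. Recall that $D$ is defined to be big when $\kappa(D) = \dim X$. By the preceding Lemma, there are constants $a, A > 0$ with $a \cdot m^{\kappa} \leq \dim H^0(X, \calO(mD)) \leq A \cdot m^{\kappa}$ for all sufficiently large $m \in E(D)$, where $\kappa = \kappa(D)$. If $D$ is big, then $\kappa = \dim X$, so the lower bound immediately gives the desired constant $C = a$, proving the second bullet. Conversely, if such a $C > 0$ exists, then combining $C \cdot m^{\dim X} \leq \dim H^0(X, \calO(mD)) \leq A \cdot m^{\kappa}$ for large $m$ forces $\dim X \leq \kappa$; since $\kappa \leq \dim X$ always holds, we get $\kappa = \dim X$, i.e.\ $D$ is big. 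This settles the equivalence of the first two conditions cleanly from the Lemma.

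Next I would prove the equivalence with the condition involving an ample divisor $A$ and effective divisors (third bullet). For the direction assuming this condition, fix an ample $A$ and choose $m > 0$ with $mD - A \sim E$ effective. Then for every $k \geq 1$ we have $kmD \sim kA + kE$, so $H^0(X, \calO(kA)) \hookrightarrow H^0(X, \calO(kmD))$ via multiplication by the section cutting out $kE$. Since $A$ is ample, $\dim H^0(X, \calO(kA))$ grows like $k^{\dim X}$, which forces the growth condition of the second bullet along the arithmetic progression $\{km\}$, hence $D$ is big. For the converse, assuming $D$ is big, I would compare the growth rates: the sections of $\calO(mD)$ grow like $m^{\dim X}$, while those of $\calO(A)$ twisted appropriately grow more slowly, and a counting argument shows $H^0(X, \calO(mD - A)) \neq 0$ for some large $m$, giving the required effective divisor $mD - A \sim E$.

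Finally I would handle the fourth bullet, which replaces linear equivalence by numerical equivalence. The implication from the third to the fourth is trivial, since linear equivalence implies numerical equivalence. The reverse implication is the point that requires care, and this is where I expect the main obstacle to lie: one must show that if $mD - A$ is numerically equivalent to an effective divisor for every ample $A$, then $D$ is already big. The natural route is to absorb the numerical-versus-linear gap by replacing $A$ with a slightly smaller ample divisor: if $mD - A \equiv E$ with $E$ effective, then since the difference of numerically equivalent divisors is numerically trivial and ampleness is an open condition on the Néron--Severi space, one can write $A = A' + N$ with $A'$ ample and $N$ numerically trivial, so that $mD - A' \sim E + (\text{something effective})$ up to adjusting by $N$; then one applies the already-established equivalence with the third bullet. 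The care needed is precisely in managing numerically trivial divisors, and for this I would invoke the openness of the ample cone inside $N^1(X)_{\mR}$ together with the criterion already proved, rather than attempting a direct cohomological estimate.
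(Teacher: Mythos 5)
The paper itself offers no proof of this proposition: it is imported wholesale from Lazarsfeld (\cite{La04}, Lemma 2.2.3 and Corollary 2.2.7), so the only meaningful comparison is with the argument in that reference, which your proposal essentially reconstructs. The skeleton is right: the equivalence of the first two bullets follows from the cited growth lemma exactly as you say; the third bullet implies bigness by multiplying with the canonical section of $kE$ and using that $h^0(X,\calO(kA))$ grows like $k^{\dim X}$; the third bullet trivially implies the fourth; and the fourth is handled by absorbing a numerically trivial divisor into the ample one.

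Two steps are too loose as written, though both are repairable by standard means. In the direction (big)$\Rightarrow$(third bullet), the phrase ``sections of $\calO(A)$ twisted appropriately grow more slowly'' is not yet an argument; the actual mechanism (Kodaira's lemma) is the restriction sequence $0\to\calO_X(mD-F)\to\calO_X(mD)\to\calO_F(mD)\to 0$ for an effective member $F\in|kA|$, which exists only after replacing $A$ by a very ample multiple $kA$ --- note that $A$ itself may have no sections at all --- together with the bound $h^0(\calO_F(mD))=O(m^{\dim X-1})$ on the $(\dim X-1)$-dimensional scheme $F$; one then recovers the statement for $A$ itself by choosing $k$ so that $(k-1)A$ is also very ample, whence $mD-A\sim(mD-F)+(k-1)A$ is linearly equivalent to an effective divisor. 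In the direction (fourth bullet)$\Rightarrow$(big), the fact you need is not openness of the ample cone but the statement that ampleness depends only on the numerical class (Kleiman's theorem, \cite{La04}, Corollary 1.2.20 --- this is precisely what makes the ample cone in the N\'eron--Severi space well defined): from $mD-A\equiv E$ put $N:=E-(mD-A)$, which is numerically trivial, and $A':=A-N=mD-E$; then $A'\equiv A$ is ample and $mD-A'=E$ is effective. Finally, observe that this verifies the third bullet only for the single ample divisor $A'$, whereas that bullet quantifies over all ample divisors, so ``apply the already-established equivalence with the third bullet'' is, strictly speaking, circular; it is harmless because your proof that the third bullet implies bigness uses only one ample divisor, but that remark must be made explicit. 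With these repairs the proposal is a complete proof and coincides with the one in \cite{La04}.
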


When $D$ is nef is a lot easier to verify if it is big:

\begin{prop}[see \cite{La04}, Theorem 2.2.16 and Corollary 1.4.41]\label{cns D big e nef} Let $D$ be a nef divisor on $X$ and let $n$ be the dimension of $X$. Then $D$ is big if and only if $vol(D)=D^{n}$ is strictly positive.
\end{prop}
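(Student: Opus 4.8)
The plan is to read off both the numerical quantity $D^n$ and the bigness of $D$ from the asymptotic growth of the spaces of sections $H^0(X,\calO(mD))$, and then to compare them. Recall that for an arbitrary Cartier divisor one defines the volume by
\[vol(D):=\limsup_{m\to\infty}\frac{\dim H^0(X,\calO(mD))}{m^n/n!},\]
so that the proposition amounts to the identity $vol(D)=D^n$ for nef $D$ together with the equivalence ``$D$ is big $\iff vol(D)>0$''.

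First I would prove the identity $vol(D)=D^n$ when $D$ is nef. By asymptotic Riemann--Roch one has $\chi(X,\calO(mD))=\frac{D^n}{n!}m^n+O(m^{n-1})$, so it is enough to control the higher cohomology: for $D$ nef one has $\dim H^i(X,\calO(mD))=O(m^{n-i})$ for every $i>0$ (this is \cite{La04}, Corollary 1.4.41, and is the only place where nefness genuinely enters). Substituting, $\dim H^0(X,\calO(mD))=\frac{D^n}{n!}m^n+O(m^{n-1})$, whence $vol(D)=D^n$.

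Granting this, both implications follow from the polynomial estimates of the preceding lemma (\cite{La04}, Corollary 2.1.38), which bound $\dim H^0(X,\calO(mD))$ between $a\,m^{\kappa(D)}$ and $A\,m^{\kappa(D)}$ for large $m\in E(D)$. If $D$ is big then $\kappa(D)=n$, so $\dim H^0(X,\calO(mD))\geq a\,m^n$ for all large $m$, and dividing by $m^n/n!$ yields $D^n=vol(D)\geq a\cdot n!>0$. Conversely, if $D^n=vol(D)>0$, then $\kappa(D)$ cannot be smaller than $n$: otherwise the upper bound would give $\dim H^0(X,\calO(mD))\leq A\,m^{\kappa(D)}=o(m^n)$ along the sequence of $m\in E(D)$ realizing the $\limsup$, forcing $vol(D)=0$, a contradiction. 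Hence $\kappa(D)=n$, i.e. $D$ is big. (Alternatively, positivity of the $\limsup$ produces a constant $C>0$ with $\dim H^0(X,\calO(mD))\geq C\,m^n$ infinitely often, which is exactly the second criterion of Proposition \ref{cns D big}.)

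The heart of the matter, and the only genuinely nontrivial step, is the cohomological vanishing estimate for nef divisors used in the second paragraph: it is what both makes $vol(D)$ computable as the top self-intersection $D^n$ and isolates the role of the hypothesis that $D$ be nef. After that, the equivalence is a formal consequence of the growth asymptotics already recorded in the preceding lemma.
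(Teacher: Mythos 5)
The paper offers no proof of this proposition at all---it is quoted directly from Lazarsfeld (Theorem 2.2.16 and Corollary 1.4.41)---and your argument is a correct reconstruction of exactly the proof given in that reference: asymptotic Riemann--Roch plus the nef vanishing estimate $h^i(X,\calO(mD))=O(m^{n-i})$ give $vol(D)=D^n$, and the equivalence of bigness with $vol(D)>0$ then follows from the growth bounds recorded in the paper's preceding lemma (\cite{La04}, Corollary 2.1.38). The only quibble is bibliographic: the $O(m^{n-i})$ estimate is Theorem 1.4.40 in Lazarsfeld, Corollary 1.4.41 being the resulting asymptotic for $h^0$ that it implies.
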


One can define a \textit{$\mQ$-divisor} (respectively $\mR$-divisor) as an element of $CDiv(X)_\mQ$ (resp. of $CDiv(X)_\mQ$). All the previous definitions can easily extend to $\mQ$-divisors. For example, we say that a $\mQ$-divisor $D$ is big if there is $m>0$ such that $mD$ is a big divisor. One can also extends such definitions to $\mR$-divisors, but we do not do it here because it is a little more technical.
Often one  works in the quotient  $Pic(X)_\mR/\equiv$ of the real Picard group by the numerical equivalence for the following two reasons: such space is finite-dimensional (see \cite{La04}, Proposition 1.1.14) and the previous definitions depends only by the numerical class of a divisor (see \cite{La04},  Corollary 1.2.20  and Corollary 2.2.7). Often, we will work with  $\mQ$-divisors by simplicity.

We define the following cones in $Pic(X)_\mQ/\equiv$. \begin{itemize}
\item $Amp(X)$ is the cone generated by the class of ample divisors;
\item $Nef(X)$ is the cone generated by the class of nef divisors;
\item $Big(X)$ is the cone generated by the class of big divisors;
\item $Eff(X)$ is the cone generated by the class of effective divisors.
\item the pseudo-effective cone $PE(X)$ is the closure of $Big(X)$.
\end{itemize}

Such cone are related in the following way:
\begin{prop} All the previous cone are convex. Moreover:
\begin{itemize}
\item $Amp(X)$ and $Big(X)$ are open;
\item $Nef(X)$ and $PE(X)$ are closed;
\item$[$see \cite{La04}, Theorem 1.4.23 $]$  $Nef(X)$ is the closure of $Amp(X)$ and $Amp(X)$ is the interior part of $Nef(X)$;
\item$[$see \cite{La04}, Theorem 2.2.26 $]$ $PE(X)$ is the closure of $Big(X)$ and $Big(X)$ is the interior part of $PE(X)$;
\item $Big(X)\subset Eff(X)\subset PE(X)$.
\end{itemize}
\end{prop}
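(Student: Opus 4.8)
The plan is to dispatch each bullet either directly from the definitions or from the two cited theorems of \cite{La04}, organising the argument so that the elementary facts come first and the openness statements are then read off from the ``interior part'' statements. Throughout I would take all topological statements inside the finite-dimensional real space $Pic(X)_\mR/\equiv$, where closure and interior are unambiguous.

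First I would settle convexity and closedness, which are purely formal. Each of $Amp(X)$, $Nef(X)$, $Big(X)$, $Eff(X)$ is generated by a class of divisors stable under addition and under multiplication by a positive rational: the sum of two ample (resp.\ nef, big, effective) divisors is again ample (resp.\ nef, big, effective), and likewise for positive multiples. Hence each is a convex cone, and $PE(X)$, as the closure of the convex cone $Big(X)$, is convex too. For closedness I would write $Nef(X)=\bigcap_{C}\{[D]\in Pic(X)_\mR/\equiv\;:\;D\cdot C\geq 0\}$, an intersection of closed half-spaces, since for each curve $C$ the pairing $[D]\mapsto D\cdot C$ is a well-defined linear, hence continuous, functional on $Pic(X)_\mR/\equiv$; thus $Nef(X)$ is closed, while $PE(X)$ is closed by its very definition as a closure.

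Next I would invoke the two cited results. By \cite{La04}, Theorem 1.4.23 one has $Nef(X)=\overline{Amp(X)}$ and $Amp(X)=\mathrm{int}\,Nef(X)$, and by \cite{La04}, Theorem 2.2.26 one has $PE(X)=\overline{Big(X)}$ and $Big(X)=\mathrm{int}\,PE(X)$; openness of $Amp(X)$ and of $Big(X)$ is then immediate, each being the interior of a set. It remains to prove the chain $Big(X)\subset Eff(X)\subset PE(X)$. For the first inclusion, if $D$ is big then $\kappa(D)=\dim X\geq 0$, so $E(D)\neq\vuoto$ and $H^0(X,\calO(mD))\neq 0$ for some $m>0$; hence $mD$ is linearly equivalent to an effective divisor and $[D]\in Eff(X)$. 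For the second, given an effective $D$ and a fixed ample $A$, I would use that $\epsilon A$ is big for every rational $\epsilon>0$ (a positive multiple of an ample divisor being ample) and that the sum of a big divisor and an effective one is again big: multiplication by a global section of $\calO(mD)$ embeds $H^0(X,\calO(m\epsilon A))$ into $H^0(X,\calO(m(\epsilon A+D)))$, so the growth bound of Proposition \ref{cns D big} is inherited. Thus $\epsilon A+D$ is big for all $\epsilon>0$, and letting $\epsilon\to 0^{+}$ places $[D]$ in $\overline{Big(X)}=PE(X)$.

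The genuinely hard content lies not in these elementary steps but in the two cited theorems, especially Theorem 2.2.26: the equality $Big(X)=\mathrm{int}\,PE(X)$ rests on the continuity and positivity of the volume function, i.e.\ on Proposition \ref{cns D big e nef} and its surrounding theory, so the honest work on my side is simply to cite these correctly. The only points demanding real care are the passage between integral, $\mQ$- and $\mR$-divisors (so that ``big plus effective is big'' and the perturbation argument survive after clearing denominators) and the check that every assertion is made in the finite-dimensional quotient $Pic(X)_\mR/\equiv$, where the notions of open, closed and interior are meaningful.
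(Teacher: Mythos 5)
Your proposal is correct and takes essentially the same route as the paper: the paper states this proposition without proof, delegating the two substantive bullets to exactly the results you cite (\cite{La04}, Theorem 1.4.23 and Theorem 2.2.26), and your remaining verifications (convexity and closedness from additivity and intersection with curves, openness from the interior statements, and the chain $Big(X)\subset Eff(X)\subset PE(X)$ via ``some multiple of a big divisor is effective'' and ``effective plus small ample is big'') are precisely the routine steps the paper leaves implicit. Your care about working in the finite-dimensional real space $Pic(X)_\mR/\equiv$ and about rational $\epsilon$ is appropriate and consistent with the paper's conventions.
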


We define $Eff(X)$ in the same way when $X$ is only complete (and normal).
Finally we say that $X$ is $\mQ$-factorial if its rational Picard group coincides with the rational class group; clearly the smooth varieties are $\mQ$-factorial.

When $X$ is a spherical variety, two  Cartier divisors are numerically equivalent if and only if they are linearly equivalent (see for example \cite{Br93}, Corollaire 1.3), so we can omit the quotient by $\equiv$. Moreover, any effective divisor is linearly equivalent to a
$B$-stable effective divisor (see for example \cite{Br93},
Th\'{e}or\`{e}me 1.3). Thus, if $X$ is $\mQ$-factorial, the effective cone $Eff(X)$ is the
closed polyhedral cone generated by the class of the $B$-stable
prime divisors (there are a finite number of them); in particular $Eff(X)$ is equal to the
pseudo-effective cone.

\subsection{The Picard group}\label{sez: picard}

The class group of a symmetric variety is generated by the classes
of the $B$-stable prime divisors with the relations $div(f)$, where
$f\in \mathbb{C}(G/H)^{(B)}$. Indeed $Cl(BH/H)=Pic(BH/H)$ is
trivial. Given $\omega\in \chi(S)$ we denote by $f_{\omega}$  the
element of $\mathbb{C}(G/H)^{(B)}$ with weight $\omega$ and such
that $f_{\omega}(H/H)=1$. We denote by $v_{E}$ the image of $E\in
\calN(X)$ in $\chi_{*}(S)$; vice versa, given an element $\omega$ of the
image of $\calN(X)$ in $C^{-}\cap \calN$ we denote by $E_{\omega}$ the
corresponding elements of $\calN(X)$.

A Weil divisor $D=\sum_{F\in D(G/H)} a_{F}F+       \sum_{E\in
\calN(X)}b_{E}E$ is a Cartier divisor if and only if, for each colored
cone $(\calC,\calF)$, there is $h_{\calC}\in \chi(S)$ such that $h_{\calC}(E)=a_{E}$
for each $E\in \calC$ and $h_{\calC}(\rho(F))=a_{F}$  for each $F\in \calF$. The
$h_{\calC}$ define a piecewise linear function on the support of $\mbF(X)$ (see \cite{Br89},  Proposition 3.1). We denote such function by $h^D$ or by $h$; sometimes we will use also the notation $h^D_\calC$ instead of $h_\calC$.

Let $\widetilde{PL}(X)$ be the set of functions $h$ on the support
$\bigcup \calC$ of $\mbF(X)$ such that: 1) $h$ is linear on
each colored cone; 2) $h$ takes integral values at all the point of
 $\chi_{*}(S)\cap (\bigcup \calC)$. Let $PL(X)$ be the quotient of
$\widetilde{PL}(X)$ by the subset  of  restrictions of linear
functions. If $X$ is complete, there is the following  exact sequence (see \cite{Br89}, Theorem 3.1):

\[0\rightarrow \bigoplus_{F\in D(G/H)\senza\,\calF(X)}\mathbb{Z}F\rightarrow Pic(X)\rightarrow PL(X)\rightarrow0.\]

A Cartier divisor is globally generated (resp. ample) if and
only if the associated function is convex (resp. strictly convex)
and $h_{\calC}(\rho(F))\leq a_{F}$ (resp. $h_{\calC}(\rho(F))< a_{F}$) for
each colored cone $(\calC,\calF)$ and each $F\in D(G/H)\senza \calF$ (see \cite{Br89}, Proposition 3.3). In particular a Cartier divisor is nef if and only if it is globally generated. Given any Cartier
divisor $F=\sum_{D(G/H)}n_{F}F+ \sum_{\calN(X)}h(E)E$, then
$H^{0}(X,\mathcal{O}(D))$ is a multiplicity-free representation of
$G$ and its
highest weights are in one-to-one correspondence with the point of
$\chi(S)\cap P(D)$ where $P(D)$ is the polytope in
$\chi(S)_{\mathbb{R}}$ intersection of the following half-spaces (see \cite{Br89}, Theorem 3.3):
i) $\{m: m(E)+h(E)\geq0\}$ for each $E\in \calN(X)$;  ii)   $\{m:
m(F)+n_{F}\geq0\}$ for each $F\in D(G/H)$.  If $D$ is globally
generated then the $h_{\calC}$ belongs to $\chi(S)\cap P(D)$. If moreover $X$ is toroidal and  $D$ is $G$-stable, then $P(D)$ is the
intersection of the positive Weyl chamber $C^{+}$ with the convex
hull
$Q(D)=convex (wh_{\calC})$ of the points $wh_{\calC}$, where $(\calC,\vuoto)$ varies in
the set of maximal colored cone and $w$ varies in $W_{G,\theta}$
(see \cite{Bi90}, Corollary 4.1). Furthermore, the integral points in $Q(D)$ are the weights of a basis of seminvariant vectors of $H^{0}(Z^{c},\mathcal{O}(D)|Z^{c})$ (see \cite{Bi90}, Proposition 4.1) and the volume of $\calO(D)|Z^c$ is equal to $(rank\,G/H)!\ vol(Q(D))$.

\begin{rem}\label{rem: sezioni pullback} Let $\varphi:X\rightarrow X'$ be a $G$-equivariant, birational morphism of symmetric varieties and let $L$ be any line bundle over $X'$, then $H^{0}(X',L)$ is isomorphic to $H^0(X,\grf^*(L))$; in particular, $L$ is big if and only if $\grf^*(L)$ is big. Moreover, $L$ is nef if and only if $\grf^*L$ is nef, because of the previous description of nef divisors.
\end{rem}

When $X$ is toroidal we have the following split exact sequence (see \cite{Br89}, Proposition 3.2):

\[0\rightarrow Pic(X_{0})\rightarrow Pic(X)\rightarrow Pic(Z)\rightarrow0,\]
where the maps are induced respectively by the projection
$X\rightarrow X_{0}$ and by the inclusion $Z\hookrightarrow X$.
Given any simple $X$, its Picard group is isomorphic to $\bigoplus_{F\in D(G/H)\senza \calF(X)}\mathbb{Z}[F]$; in particular $Pic(X_{0})=\bigoplus_{F\in D(G/H)}\mathbb{Z}[F]$.

A (complete) symmetric variety is $\mathbb{Q}$-factorial if and only if each
colored cone is simplicial and $\rho$ is injective over $\calF(X)$ (see \cite{Br93}, Proposition 4.2).
Recall that a cone is said simplicial if it is generated by a number
of vectors equal to its dimension. In particular the standard
completion of any symmetric space is $\mathbb{Q}$-factorial. The
conditions for the smoothness  are much more complicated (see
\cite{Ru2}, Theorem 2.2). Notice that the most part of this section is true for any spherical variety:  in particular the descriptions of the class group and of the Picard group holds in general. Also the previous condition for the $\mQ$-factoriality is stated in \cite{Br93} in a more general form which holds for all spherical varieties.

\section{Spherical closure}\label{sez: spher clos}

We define (after Luna) the spherical closure $\overline{H}^{sph}$ of
$H$ as the subgroup of $N_{G}(H)$ which stabilizes all the colors of
$G/H$. The standard completion $X_{0}^{sph}$ of
$G/\overline{H}^{sph}$ is wonderful and the standard completion $X_{0}$ of
$G/H$ is a ramified cover of  $X_{0}^{sph}$. Moreover the
projection induces an isomorphism between their rational Picard
groups. Indeed, we can identify $D(G/H)$ with
$D(G/\overline{H}^{sph})$.

\begin{prop}\label{spherical closure}
Let $G/H$ a symmetric space, then we can write
$(G,\theta)=\prod_{i=1}^{n}(G_{i},\theta)$ so that $G/H$ is a direct
product $\prod_{i=1}^n G_{i}/(G_{i}\cap H)$, where the $(G_{i},\theta)$ with
$i>1$ are indecomposable and $|D(G_{i}/(G_{i}\cap
H))|>rank(G_{i}/(G_{i}\cap H))$ if and only if $i>1$. Moreover
$\overline{H}^{sph}=N_{G_{1}}(G_{1}\cap H)\times
\prod_{i=2}^{n}(G_{i}\cap H)$.
\end{prop}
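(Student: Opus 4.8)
The plan is to reduce everything to the indecomposable factors of $(G,\grt)$ and to exploit a single structural fact: every $G$-equivariant automorphism of $G/H$ preserves $\rho$. Concretely, an element $g\in N_G(H)$ acts by right translation $r_g$, and if $f_\omega$ denotes the $B$-eigenfunction of weight $\omega$, then $r_g^*f_\omega$ is again a $B$-eigenfunction of the same weight $\omega$, hence a scalar multiple of $f_\omega$; thus $v_F(r_g^*f_\omega)=v_F(f_\omega)$ and so $\rho(r_g\cdot F)=\rho(F)$ for every color $F$. Consequently $N_G(H)/H$ can only permute colors \emph{inside} a single fibre $\rho^{-1}(\alpha_i^\vee)$, and since each such fibre has at most two elements, the only possible nontrivial moves are transpositions of the two colors of a fibre with $|\rho^{-1}(\alpha_i^\vee)|=2$. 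By definition $\ol{H}^{sph}/H$ is the kernel of this action on $D(G/H)$, so the whole computation comes down to deciding, factor by factor, whether such a transposition is realized.

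First I would establish the product decomposition. Write $G=\prod_j K_j$ as the product of $\grt$-stable indecomposable factors, so $G^\grt=\prod_j K_j^\grt$, $N_G(G^\grt)=\prod_j N_{K_j}(K_j^\grt)$, and $\ol H:=H/G^\grt$ is a subgroup of $A:=\prod_j A_j$ with $A_j:=N_{K_j}(K_j^\grt)/K_j^\grt$. Call $K_j$ of \emph{type B} if $|D(K_j/(K_j\cap H))|>\mathrm{rank}(K_j/(K_j\cap H))$, and \emph{type A} otherwise; the $K_j$ of type B are exactly the factors destined to become the $G_i$ with $i>1$. The key point is that $\ol H$ projects trivially to every type-B factor. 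Indeed, elements of $\ol H$ act trivially on $G/H$, hence fix all colors; if the projection $p_j(\ol H)\subset A_j$ contained the color-swapping element for a type-B factor $K_j$ (the nontrivial case of the dichotomy recalled in \S\ref{sez: N+Dg/h}), this element would interchange the two \emph{unmerged} colors $F_\alpha^+,F_\alpha^-$ of $K_j/(K_j\cap H)$ while descending to the identity automorphism, a contradiction; and in the remaining case $A_j$ is already trivial. Collecting the type-A factors into $G_1$ and letting $G_2,\dots,G_n$ be the type-B factors, this shows $\ol H\subset \prod_{j\ \mathrm{type\ A}}A_j$, so $H=(G_1\cap H)\times\prod_{i>1}(G_i\cap H)$ and $G/H=\prod_{i=1}^n G_i/(G_i\cap H)$, with $|D(G_i/(G_i\cap H))|>\mathrm{rank}$ precisely for $i>1$.

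It then remains to compute the spherical closure of each factor, using that colors of a product are the colors of the factors and that $N_G(H)=\prod_i N_{G_i}(G_i\cap H)$, whence $\ol{H}^{sph}=\prod_i\ol{(G_i\cap H)}^{sph}$. For $i>1$ the factor $(G_i,\grt)$ is indecomposable with $|D|>\mathrm{rank}$, so by the dichotomy either $G_i^\grt=G_i\cap H=N_{G_i}(G_i^\grt)$, forcing $N_{G_i}(G_i\cap H)/(G_i\cap H)$ trivial, or the nontrivial coset of $N_{G_i}(G_i^\grt)/G_i^\grt$ swaps two colors and hence lies outside the kernel; in both cases $\ol{(G_i\cap H)}^{sph}=G_i\cap H$. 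For $G_1$ one has $|D(G_1/(G_1\cap H))|=\mathrm{rank}(G_1/(G_1\cap H))$; since $\rho$ surjects onto the $\mathrm{rank}$-many coroots $\{\alpha_1^\vee,\dots,\alpha_s^\vee\}$, equality of cardinalities forces $\rho$ to be injective on colors, so every color lies in $D(G_1/(G_1\cap H))^{G_1\cap H}$. Combined with the fact that automorphisms preserve $\rho$, this gives that every element of $N_{G_1}(G_1\cap H)$ fixes all colors, i.e. $\ol{(G_1\cap H)}^{sph}=N_{G_1}(G_1\cap H)$, yielding the stated formula.

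The main obstacle is the regrouping step of the middle paragraph: showing that the possibly ``diagonal'' embedding of $\ol H$ inside $\prod_j A_j$ only involves the type-A factors, so that $H$ genuinely decomposes as a product after reordering. This is where the triviality of the $H$-action on $G/H$ must be played against the swap-or-merge alternative for the two colors of an exceptional fibre; once this is in place, the factorwise computation of $\ol{H}^{sph}$ is a direct application of the dichotomy and of the $\rho$-invariance of automorphisms.
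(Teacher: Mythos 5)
Your first and third paragraphs are sound: the $\rho$-invariance of the $N_G(H)$-action on colors is exactly the fact the paper isolates just before its proof, and your factorwise computation of the spherical closure (including the clean argument for $G_1$ via injectivity of $\rho$) is correct \emph{once a genuine product decomposition is available}. The gap is the ``key point'' of your middle paragraph, and it is fatal as written: with your definition of type B, the claim that $\ol{H}$ projects trivially to every type-B factor is false. The source of the error is that $|D(K_j/(K_j\cap H))|$ is governed by the \emph{intersection} $(K_j\cap H)/K_j^\theta=\ol{H}\cap A_j$, whereas the colors of $G/H$ lying over the factor $K_j$ are the orbits of $D(K_j/K_j^\theta)$ under the \emph{projection} $p_j(\ol{H})$; when $\ol{H}$ sits diagonally inside $\prod_j A_j$ these two subgroups of $A_j$ differ. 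Your contradiction tacitly assumes that $F_\alpha^+$ and $F_\alpha^-$, being distinct colors of $K_j/(K_j\cap H)$, stay distinct as colors of $G/H$; but if $p_j(\ol{H})$ contains the swap, they merge in $G/H$, and then the element $h\in H$ fixes the merged color while acting trivially on $G/H$ --- no contradiction. Proving that they do not merge is equivalent to the triviality of $p_j(\ol{H})$, so the argument is circular.

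Here is an explicit counterexample. Let $G=SL_2\times SL_2$ and let $\theta$ act on each factor by conjugation by $\mathrm{diag}(i,-i)$, so $G^\theta=T_1\times T_2$ (diagonal tori) and $A_j=N_{SL_2}(T_j)/T_j\cong\mZ/2$, whose nontrivial element $\sigma_j$ exchanges the two colors of $SL_2/T_j$. Take $H$ generated by $G^\theta$ and a single element $(n_1,n_2)$ with $n_j\in N_{SL_2}(T_j)\senza T_j$, so that $\ol{H}$ is the diagonal $\mZ/2$ in $A_1\times A_2$. Then $K_j\cap H=T_j=K_j^\theta$, hence both factors are type B in your sense ($2$ colors $>$ rank $1$), and yet $p_j(\ol{H})=A_j\ni\sigma_j$ for both $j$. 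In fact $|D(G/H)|=2=rank(G/H)$, the decomposition of the Proposition is the trivial one ($n=1$, $G_1=G$), and $\ol{H}^{sph}=N_G(H)=N_{SL_2}(T_1)\times N_{SL_2}(T_2)$; your recipe instead outputs $G/H\cong SL_2/T_1\times SL_2/T_2$ and $\ol{H}^{sph}=T_1\times T_2$, which cannot be right since it does not even contain $H$. This is precisely why the paper classifies the indecomposable factors by two conditions stated at the level of $G_i/G_i^\theta$: whether $|D(G_i/G_i^\theta)|>rank(G_i/G_i^\theta)$, \emph{and} whether some element of the full group $H$ (not of $G_i\cap H$) exchanges the two colors of $G_i/G_i^\theta$. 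Only the factors with more colors than rank and with no swapping element of $H$ are split off as the $G_i$ with $i>1$ --- for those, the dichotomy of \S\ref{sez: N+Dg/h} really does force $p_i(\ol{H})$ to be trivial, which yields the product decomposition of $H$ --- while factors admitting a swapping element of $H$, like both factors in the example above, must be absorbed into $G_1$. Replacing your type A/type B dichotomy by this one is the missing idea; your remaining two paragraphs then go through essentially as the paper's proof does.
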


To prove such proposition we will use \S\ref{sez: N+Dg/h}. In particular, we will use the following fact: if there is $n\in N(H)$ and $D_1$, $D_2\in D(G/H)$ such that $nD_1=D_2$, then $\rho(D_1)=\rho(D_2)$.

\begin{proof} First, we  reduce to the non-exceptional case. Write
$(G,\theta)=\prod_{i=1}^{n}$ $(G_{i},\theta)$ with $(G_{1},\theta)$ non
exceptional and the other $(G_{i},\theta)$  indecomposable and
exceptional. For each $i>1$, we have $G_{i}^{\theta}=G_{i}\cap H
=N_{G_{i}}(G_{i}^{\theta})$ and $|D(G_{i}/(G_{i}\cap
H))|=rank(G_{i}/(G_{i}\cap H))$+1; in particular the $G_{i}\cap H$
with $i>1$ are spherically closed in $ G_{i}$ and $G/H$ is the
direct product $\prod G_{i}/(G_{i}\cap H)$.

Suppose now $X$ non-exceptional and write $(G,\theta)=\prod
(G_{i},\theta)$ with the $(G_{i},\theta)$ indecomposable. Let
$H_{i}:=G_{i}\cap H$. We can think of $D(G_{i}/G_{i}^{\theta})$ as a
subset of $D(G/G^{\theta})$ by associating $F\times \prod_{j\neq
i}G_{j}/G_{j}^{\theta}\in D(G/G^{\theta})$ to any $F\in
D(G_{i}/G_{i}^{\theta})$.  We can suppose that: 1) $|D(G_{i}/
G_{i}^{\theta})|>rank(G_{i}/ G_{i}^{\theta})$ if and only if $i> r$;
2) there is $h\in H$ which exchanges two colors of
$G_{i}/G_{i}^{\theta}$ if and only if $r<i\leq r+m$. Observe that if
$i>r$ there is always an element of $N_{G}(H)$ which exchanges two
colors of $G_{i}/G_{i}^{\theta}$. Let $G_{0}=\prod_{i=1}^{r+m} G_{i}
$, then $\overline{H}^{sph}$ is contained in $N':=
N_{G_{0}}(H_{0})\times \prod_{i>r+m} H_{i}$ because any element of
$N_{G}(H)\senza N'$ exchanges two colors of some
$G_{i}/G_{i}^{\theta}$ with $i>r+m$ (which correspond to two
distinct colors of $G/H$). Moreover, the number of colors of
$G_{0}/ H_{0}$ is equal to its rank. But $|D(G_{0}/ H_{0})|\geq
|D(G_{0}/ N_{G_{0}}(H_{0}))|\geq rank(G_{0}/
N_{G_{0}}(H_{0}))=rank(G_{0}/ H_{0})$, thus the spherical closure of
$H_{0}$ in $G_{0}$ is $N_{G_{0}}(H_{0})$. Therefore
$\overline{H}^{sph}\supset N_{G_{0}}(H_{0})\times
\prod_{i>r+m}H_{i}$.
\end{proof}

\begin{cor} \label{compat spherical closure} If $H$ is spherically closed, then $G/H$ is a direct product of
indecomposable symmetric spaces $\prod G_{i}/H_{i}$. Moreover, the
wonderful completion of $G/H$ is the product of the wonderful
completions of the $G_{i}/H_{i}$.
\end{cor}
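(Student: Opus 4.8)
The plan is to deduce Corollary \ref{compat spherical closure} directly from Proposition \ref{spherical closure} by specializing to the case $H=\ol{H}^{sph}$. First I would observe that if $H$ is spherically closed, then $H=\ol{H}^{sph}$, so the formula $\ol{H}^{sph}=N_{G_1}(G_1\cap H)\times\prod_{i=2}^n(G_i\cap H)$ of the proposition forces $H=N_{G_1}(G_1\cap H)\times\prod_{i=2}^n(G_i\cap H)$. Intersecting both sides with $G_1$ gives $G_1\cap H=N_{G_1}(G_1\cap H)$, i.e.\ the first factor is already normalizer-stable, and comparing factors gives $H=\prod_{i=1}^n(G_i\cap H)$. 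Setting $H_i:=G_i\cap H$, this exhibits $G/H$ as the direct product $\prod_{i=1}^n G_i/H_i$, and by the construction in the proposition each $(G_i,\theta)$ is indecomposable. This establishes the first sentence.

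For the decomposition into \emph{indecomposable} symmetric spaces I would note that the factorization $(G,\theta)=\prod_i(G_i,\theta)$ supplied by the proposition has exactly this property by hypothesis ($i>1$ indecomposable) together with the reduction in the proof's opening paragraph; alternatively one invokes the canonical decomposition into indecomposable factors recalled in \S\ref{sez: N+Dg/h}, and checks that spherical closedness of $H$ is inherited by each factor. The key point is that $H_i$ is spherically closed in $G_i$: since $H=\prod_i H_i$ and the colors of $G/H$ are the colors of the various $G_i/H_i$ (as explained in the proof, via $F\times\prod_{j\neq i}G_j/H_j$), an element of $N_{G_i}(H_i)$ stabilizing all colors of $G_i/H_i$ stabilizes all colors of $G/H$, hence lies in $\ol{H}^{sph}\cap G_i=H_i$; thus $\ol{H_i}^{sph}=H_i$.

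Finally, for the statement about wonderful completions I would use that the wonderful completion of a spherically closed symmetric space is its standard completion $X_0$ (recalled in \S\ref{sez: toroidal}: when $H=N_G(G^\theta)$ the standard completion is wonderful, and more generally the standard completion of $G/\ol{H}^{sph}$ is wonderful). Since $G/H=\prod_i G_i/H_i$ and the standard completion is associated to the colored cone $(cone(\calN),\vuoto)$, and since the valuation cone $\calN$ and the lattice $\chi_*(S)$ both decompose as direct products over the indecomposable factors (because $G/G^\theta=\prod_i G_i/G_i^\theta$), the cone $cone(\calN)$ is the product of the corresponding cones for the $G_i/H_i$. Hence $X_0(G/H)=\prod_i X_0(G_i/H_i)$, and each factor $X_0(G_i/H_i)$ is the wonderful completion of $G_i/H_i$, giving the claim.

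I expect the main obstacle to be the last paragraph: one must verify carefully that the standard (wonderful) completion commutes with the direct-product decomposition, which amounts to checking that passing to the colored fan and taking the toroidal simple completion associated to $(cone(\calN),\vuoto)$ is compatible with the product decomposition of the data $(\chi_*(S),\calN)$. This is a combinatorial compatibility that should follow from the factorization $G/G^\theta=\prod_i G_i/G_i^\theta$ recorded in \S\ref{sez: N+Dg/h} together with the classification of simple symmetric varieties by colored cones, but it requires identifying the product of wonderful varieties with the wonderful variety of the product, rather than just a generic toroidal completion.
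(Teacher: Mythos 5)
Your plan --- specialize Proposition \ref{spherical closure} to $H=\overline{H}^{sph}$, read off the product decomposition, then identify the wonderful completion with the standard completion of each factor and take products --- is exactly the derivation the paper intends (it states the corollary without proof). However, your execution has a genuine gap in the treatment of the first factor. Proposition \ref{spherical closure} does \emph{not} assert that $(G_{1},\theta)$ is indecomposable: only the factors with $i>1$ are, and in the proof of the proposition $G_{1}$ is the group there called $G_{0}=\prod_{i\leq r+m}G_{i}$, which is in general a product of several indecomposable factors. Concretely, take $G=SL_{2}\times SL_{2}$ with $\theta$ acting on each factor as conjugation by $\mathrm{diag}(1,-1)$ and $H=N_{G}(G^{\theta})$: then $H$ is spherically closed and the proposition returns $n=1$, $G_{1}=G$, which is decomposable. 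So the last clause of your first paragraph (``by the construction in the proposition each $(G_{i},\theta)$ is indecomposable'') is false, and route (a) of your second paragraph rests on the same misreading. Route (b) does not repair it: your argument that $\overline{H_{i}}^{sph}=H_{i}$ \emph{presupposes} the decomposition $H=\prod_{i}(G_{i}\cap H)$ over the indecomposable factors, and for the sub-factors of $G_{1}$ that decomposition is precisely what is missing. Nor does iterating the proposition help: applied to $G_{1}/(G_{1}\cap H)$ it can return $G_{1}$ itself as its first block (it does so in the example above), so the recursion need not terminate.

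The missing step is group-theoretic and short, but it must be said. From your correct identity $G_{1}\cap H=N_{G_{1}}(G_{1}\cap H)$ one gets $G_{1}\cap H=N_{G_{1}}(G_{1}^{\theta})$: since $G_{1}$ is simply connected, $G_{1}^{\theta}$ is connected and is the identity component of $G_{1}\cap H$, so $N_{G_{1}}(G_{1}\cap H)\subseteq N_{G_{1}}(G_{1}^{\theta})$; conversely the map $g\mapsto\theta(g)g^{-1}$ embeds $N_{G_{1}}(G_{1}^{\theta})/G_{1}^{\theta}$ into the centre of $G_{1}$, so this quotient is finite abelian, hence every subgroup between $G_{1}^{\theta}$ and $N_{G_{1}}(G_{1}^{\theta})$ --- in particular $G_{1}\cap H$ --- is normal in $N_{G_{1}}(G_{1}^{\theta})$, giving the reverse inclusion. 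Now write $G_{1}=\prod_{j}G_{1,j}$ with each $(G_{1,j},\theta)$ indecomposable (\S\ref{sez: N+Dg/h}); since $G_{1}^{\theta}=\prod_{j}G_{1,j}^{\theta}$, the normalizer decomposes as $N_{G_{1}}(G_{1}^{\theta})=\prod_{j}N_{G_{1,j}}(G_{1,j}^{\theta})$, and therefore $H$ is a product over \emph{all} indecomposable factors of $(G,\theta)$. Each factor $G_{1,j}/N_{G_{1,j}}(G_{1,j}^{\theta})$ has wonderful standard completion by De Concini--Procesi (as recalled in \S\ref{sez: toroidal}), while for $i\geq2$ your color argument, which is legitimate once the product decomposition is available, shows $H_{i}$ is spherically closed in $G_{i}$. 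With this in place your third paragraph goes through; it can even be shortened: a product of wonderful completions is a smooth, simple, complete, toroidal embedding of $G/H$, hence it is the standard completion of $G/H$, i.e.\ its wonderful completion.
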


\begin{rem}\label{rem: fan spherical closure} Let $X$ be a symmetric variety  with open orbit $G/H$.
Then there is a unique maximal symmetric variety $X^{sph}$ with open
orbit $G/\overline{H}^{sph}$ and with an equivariant proper morphism
$X\rightarrow X^{sph}$ that extends the canonical projection
$G/H\rightarrow G/\overline{H}^{sph}$. Indeed, we can identify
$D(G/H)$ with $D(G/\overline{H}^{sph})$, respectively
$(\mathbb{C}(G/H)^{(B)}/\mathbb{C}^{*})_{\mathbb{Q}}$ with
$(\mathbb{C}(G/\overline{H}^{sph})^{(B)}/\mathbb{C}^{*})_{\mathbb{Q}}$.
Thus the colored fan of $X$ defines a colored fan associated to an
embedding of $G/\overline{H}^{sph}$. It is easy to show that this
variety satisfies the requested properties (see also \cite{Kn91}, \S4).
\end{rem}

\section{Effective cone of a complete symmetric variety}\label{sez: eff cone}


First, we determine $Eff(X)$ when $X$ is
$\mathbb{Q}$-factorial, then we consider the case where $X$ is
projective but possibly not $\mathbb{Q}$-factorial.

\begin{thm}\label{eff cone1}
Let $X$ be a $\mathbb{Q}$-factorial complete  symmetric variety.
Then:
\begin{enumerate}
\item  $Eff(X)$ is a closed polyhedral cone whose extremal rays
are generated by:
\begin{itemize}\item the colors which are not contained in
$D(G/H)^{H}$;
\item the $G$-stable prime divisors $E$ whose classes are not proportional
to any of the $[F^{+}_{\alpha_{i}}+F^{-}_{\alpha_{i}}]$.
\end{itemize}
\item given any extremal ray $r$ of $Eff(X)$, there exist a unique prime  divisor $D$ which belongs to one of the two previous families and
such that $[D]\in r$.
\end{enumerate}\end{thm}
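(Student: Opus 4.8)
The plan is to exploit the explicit combinatorial description of $Eff(X)$ that is already available. Since $X$ is $\mathbb{Q}$-factorial, the effective cone is the closed polyhedral cone generated by the classes of all $B$-stable prime divisors, i.e.\ by the colors $F \in D(G/H)$ together with the $G$-stable prime divisors $E \in \calN(X)$. The whole content of the theorem is therefore to decide, among these finitely many generators, which ones actually span extremal rays, and to rule out coincidences between distinct generators. My strategy is to reduce everything to linear-algebraic statements about the images of these divisors under the maps $\rho$ and the valuation embedding $\calN(X) \hookrightarrow \chi_*(S)$, using the split exact sequence for $Pic(X)_{\mathbb{Q}}$ and the structure recalled in \S\ref{sez: N+Dg/h}.

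First I would set up the pairing that detects extremality. A class $[D]$ generates an extremal ray if and only if it cannot be written as a nonnegative combination of the other generators modulo linear equivalence; equivalently, there should exist a linear functional on $Pic(X)_{\mathbb{Q}}^{\vee}$ that is nonnegative on $Eff(X)$ and vanishes exactly on $\mathbb{R}_{\geq 0}[D]$. Concretely, linear equivalences among the $B$-stable divisors come precisely from the functions $f_\omega$ with $\omega \in \chi(S)$, giving relations $\sum_{F} \omega(\rho(F))\,[F] + \sum_{E} \omega(v_E)\,[E] = 0$. So two generators are proportional, and a generator fails to be extremal, exactly according to the positions of the vectors $\rho(F)$ and $v_E$ inside the cone $C^{-} = cone(\calN)$ and its dual. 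The key facts I would invoke are that $\rho(D(G/H)) = \{\alpha_1^{\vee}, \dots, \alpha_s^{\vee}\}$, that each fibre $\rho^{-1}(\alpha_i^{\vee})$ contains at most two colors, and that $\calN = C^{-} \cap \chi_*(S)$ with $C^{-}$ the negative Weyl chamber of $R_{G,\theta}$.

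The two bullet conditions in part (1) then fall out by analyzing the relations explicitly. A color $F$ lies in $D(G/H)^{H}$ precisely when $\rho^{-1}(\rho(F)) = \{F\}$; in that case I expect the class $[F]$ to be forced into the interior of a face (nonextremal) or to coincide with a $G$-stable class, whereas colors outside $D(G/H)^{H}$ come in pairs $F_\alpha^{+}, F_\alpha^{-}$ whose individual classes survive as extremal generators. For a $G$-stable divisor $E$, its ray is extremal unless $[E]$ is proportional to some $[F_{\alpha_i}^{+} + F_{\alpha_i}^{-}] = [F_{\alpha_i}]$, which happens exactly when $v_E$ is a positive multiple of a coroot $\alpha_i^{\vee}$ lying on an extreme ray of $C^{-}$; this is the sum that appears because the relation coming from $f_\omega$ ties the sum of the two colors over $\alpha_i^{\vee}$ to the $G$-stable valuations pointing in that direction. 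Part (2), the uniqueness of the prime divisor on a given extremal ray, I would obtain by showing that within each of the two families the map to $Pic(X)_{\mathbb{Q}}$ is injective on the distinguished generators (again reading off injectivity from $\rho$ and the valuation embedding, using $\mathbb{Q}$-factoriality to guarantee $\rho$ is injective on $\calF(X)$ and that the cones are simplicial), and that no divisor of the first family shares a ray with one of the second.

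The main obstacle I anticipate is the exceptional case, where $2\alpha$ is a restricted root and $\rho^{-1}(\alpha^{\vee})$ contains two colors, together with the delicate bookkeeping distinguishing $D(G/H)^{H}$ from its complement. Here the interplay between the spherical closure (Proposition \ref{spherical closure} and Corollary \ref{compat spherical closure}) and the possible identification $nF_1 = F_2$ for $n \in N(H)$ becomes essential: one must verify that whenever two colors map to the same coroot their classes behave correctly, and that the sum $F_\alpha^{+} + F_\alpha^{-}$ is genuinely the right object to compare against the $G$-stable classes. I would handle this by first proving the result when $(G,\theta)$ is indecomposable and reducing the general case via the product decomposition $G/H = \prod G_i/(G_i \cap H)$, since $Eff$ of a product is the product of the $Eff$'s and extremal rays of a product cone are exactly the extremal rays of the factors.
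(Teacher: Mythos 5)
Your starting point and overall frame do coincide with the paper's: $Eff(X)$ is generated by the classes of the $B$-stable prime divisors, and everything is decided by the relations $div(f_{\omega})=\sum_{F}(\omega,\rho(F))F+\sum_{E}(\omega,v_{E})E$. But two concrete steps in your plan are wrong. First, your criterion for when $[E]$ is proportional to $[F_{\alpha}]=[F_{\alpha}^{+}+F_{\alpha}^{-}]$ (``exactly when $v_{E}$ is a positive multiple of a coroot $\alpha_{i}^{\vee}$ lying on an extreme ray of $C^{-}$'') is false. Since the classes $[E']$, $E'\in\calN(X)$, are linearly independent in $Pic(X)_{\mathbb{R}}$ (there are no relations among $G$-stable divisors, a $G$-eigenvector in $\mathbb{C}(X)$ being constant), the relation $[F_{\alpha}]=-\sum_{E'}(\omega_{\alpha},v_{E'})[E']$ shows that $[E]$ is proportional to $[F_{\alpha}]$ if and only if $(\omega_{\alpha},v_{E'})=0$ for \emph{every} $E'\neq E$: it is a condition on the \emph{other} valuations, not on the direction of $v_{E}$ (this is exactly point (2) of Theorem \ref{eff cone2}). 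Concretely, take $R_{G,\theta}$ of type $A_{1}\times A_{1}$ with both fibres of $\rho$ of cardinality two (e.g. $G/H=(SL_{2}\times SL_{2})/(T\times T)$), and let $X$ be the toroidal variety whose fan subdivides $C^{-}$ by the ray through $-\omega_{\alpha_{1}}^{\vee}-\omega_{\alpha_{2}}^{\vee}$, so $\calN(X)=\{E_{1},E_{2},E_{3}\}$ with $v_{E_{1}}=-\omega_{\alpha_{1}}^{\vee}$, $v_{E_{2}}=-\omega_{\alpha_{2}}^{\vee}$, $v_{E_{3}}=-\omega_{\alpha_{1}}^{\vee}-\omega_{\alpha_{2}}^{\vee}$. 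Then $v_{E_{1}}$ spans an extreme ray of $C^{-}$ and is proportional to a coroot, yet $[F_{\alpha_{1}}]=c\,([E_{1}]+[E_{3}])$ with $c>0$, so $[E_{1}]$ is \emph{not} proportional to $[F_{\alpha_{1}}]$ and in fact generates an extremal ray, contrary to what your criterion predicts. Second, your proposed reduction to indecomposable $(G_{i},\theta)$ does not exist at the level of varieties: even when $G/H=\prod G_{i}/(G_{i}\cap H)$, the colored fan of $X$ need not be a product of fans (the example just given is of this kind), so $X$ is not a product of symmetric varieties and ``$Eff$ of a product is the product of the $Eff$'s'' cannot be invoked. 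The paper never performs such a reduction; its argument works uniformly through the relations and is insensitive to decomposability.

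Beyond these two errors, the proofs of the nontrivial assertions are asserted rather than given. The paper's mechanism consists of three facts: (i) $\sigma:=Eff(X)\cap Div^{G}(X)_{\mathbb{R}}$ equals the simplicial cone generated by the independent classes $[E]$, $E\in\calN(X)$ (the argument of Lemma 2.3.1 of \cite{Br07}); (ii) every $[F_{\alpha}]$ lies in $\sigma$, because the coefficients $-(\omega_{\alpha},v_{E})$ are nonnegative by antidominance of the $v_{E}$, so $Eff(X)$ is already generated by $\calN(X)$ and $D(G/H)\senza D(G/H)^{H}$; (iii) $Pic(X)_{\mathbb{R}}=\bigoplus_{E}\mathbb{R}[E]\oplus\bigoplus_{\alpha^{\vee}\notin\rho(D(G/H)^{H})}\mathbb{R}[F_{\alpha}^{+}]$, the relations in $Cl(X)_{\mathbb{R}}$ being freely generated by the $(*_{\omega_{\alpha}})$. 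With these, extremality of each $[F_{\alpha}^{\pm}]$ follows from the observation that a principal divisor has equal coefficients on $F_{\alpha}^{+}$ and $F_{\alpha}^{-}$, so any $F_{\alpha}^{+}+div(f_{\omega})$ with vanishing coefficient on $F_{\alpha}^{+}$ has coefficient $-1$ on $F_{\alpha}^{-}$ and cannot be effective; and extremality of an $[E]$ not proportional to any $[F_{\alpha}]$ follows by projecting a putative nonnegative decomposition onto the $[F_{\beta}^{+}]$-components via (iii), which forces the $F_{\beta}^{+}$- and $F_{\beta}^{-}$-coefficients to agree, hence groups them into multiples of $[F_{\beta}]\in\sigma$, after which extremality of $[E]$ inside the simplicial cone $\sigma$ concludes; this same decomposition (iii) is what makes the uniqueness in part (2) immediate, since the $[E]$'s and $[F_{\beta}^{+}]$'s form a basis. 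Your proposal gestures at the relations (``I expect'', ``falls out by analyzing the relations explicitly'') but contains neither the coefficient computation nor the projection-and-grouping step, and the faulty proportionality criterion suggests the linear algebra was not actually carried through; as written, the core of the theorem remains unproved.
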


Let $\widetilde{Div}\, ^{G}(X)$ be the abelian group freely
generated by the $G$-stable prime divisors and let $Div^{G}(X)$ be
its image in $Pic(X)$. These groups are isomorphic; indeed  there is
no non-trivial  relation between these divisors, because any rational  function which is a $G$-eigenvector  is constant. If $X$ is toroidal,
we can identify $\widetilde{Div}\, ^{G}(X)_{\mathbb{R}} $ with
$\widetilde{Div}\, ^{T}(Z)_{\mathbb{R}} $ by the restriction.

To prove the theorem we use the explicit knowledge of the relations of $Cl(X)$. In particular, the principal divisor associated to any function in $\mathbb{C}(X)^{(B)}$ is a linear
combination of the $F_{\alpha}$ and of the $G$-stable prime
divisors. Moreover, any [$F_{\alpha}]$ belongs to
$Div^{G}(X)_{\mathbb{R}}$ because
$div(f_{\omega_{\gra}})=F_{\alpha}+\sum_{E\in \calN(X)}
v_{E}(\omega_{\gra})E$.  Thus the class of each $F_\gra$ belongs to $\sigma:=Eff(X)\cap Div^{G}(X)_{\mathbb{R}}$. This will implies that $Eff(X)$ is generated by the others $B$-stable prime divisors.

\begin{proof}[Proof of Theorem \ref{eff cone1}.]
Exactly as in \cite{Br07}, Lemma 2.3.1, we can prove that
$\sigma$ is $cone([E], \ E\in
\calN(X))$; thus $\sigma$ is simplicial because $\calN(X)$ is a basis of
$\widetilde{Div}^{G}(X)_{\mathbb{R}}$. Moreover,  $\sigma$ contains all the
$[F_{\alpha}]$ because
$div(f_{\omega_{\gra}})=F_{\alpha}+\sum_{E\in
\calN(X)}(\omega_{\gra},v_{E})E$ and the $v_{E}$ are
antidominant.
Therefore, the theorem is proved  if $\rho$ is injective.

In the
general case, $Pic(X)_{\mathbb{R}}=\bigoplus_{E\in
\calN(X)}\mathbb{R}[E]\oplus \bigoplus_{\alpha^{\vee}:
|\rho^{-1}(\alpha^{\vee})|=2}\mathbb{R}[F_{\alpha}^{+} ]$.
Indeed, $Pic(X)_{\mathbb{R}} $ $(=Cl(X)_{\mathbb{R}}$) is generated by the $[E]$ with  $E\in
\calN(X)$, the $[F_{\alpha} ]$ and the $[F_{\alpha}^{+} ]$ with
$|\rho^{-1}(\alpha^{\vee})|=2$ (see \S\ref{sez: N+Dg/h} for the definition of
 $F_{\alpha}$  and $F_{\alpha}^{+}$). Moreover, the
 relations are freely generated by the following ones $(*_{\gro_\gra})$: $[F_{\alpha}]=\sum_{E\in
\calN(X)}(\omega_{\gra},-v_{E})[E]$ with $\alpha\in
\overline{R}_{G,\theta}$. Indeed, $Cl(X)$ is generated by the $B$-stable prime divisors with relations $div(f_\gro)=0$ for any $\gro\in\chi(S)$; moreover the relation $(*_{\gro_\gra})$ is the one corresponding to the fundamental spherical weight $\gro_\gra$ (see also \S\ref{sez: picard}).
Observe that  $\rho$ is injective over
$\calF(X)$ because  $X$ is $\mathbb{Q}$-factorial.
Hence, $Eff(X)$ is generated by $\calN(X)$ and by $D(G/H)\senza D(G/H)^{H}$. Each $[F_\gra^+]$  generates an extremal ray of $Eff(X)$ because for any divisor $F_\gra^++div(f_\gro)=\sum_{E\in \calN(X)}n_EE+\sum_{\grb\in\rho(D(G/H)^H)}n_\grb F_\grb+\sum_{\grb\notin\rho(D(G/H)^H)}n_\grb^+F_\grb^++\sum_{\grb\notin\rho(D(G/H)^H)}n_\grb^-F_\grb^-$ such that $n_\gra^+=0$, we have $n_\gra^-=-1$.
We can argue similarly for the $F_\gra^-$.


If the class of  $E\in \calN(X)$ does not generate an extremal ray of $Eff(X)$, then $[E]$ is a
positive combination of the classes of the other $G$-stable prime
divisors and of the [$F_{\alpha}]$ with $\alpha^{\vee}\in
\overline{R}_{G,\theta}^\cech\senza \rho(D(G/H)^{H})$. But $[E]$
generates an extremal ray of $\sigma$, so it has to be proportional
to some $[F_{\alpha}]$.  \end{proof}

Given any
symmetric variety $X$, let $X^{\leq 1}$ be the open $G$-subvariety
composed of orbits of codimension at most 1. We have an equivariant
morphism $q:X^{\leq
1}\rightarrow X_{0}^{sph}$ which can be extended to
$X$ if and only if $X$ is toroidal. Here $X_{0}^{sph}$ is the
wonderful completion of $G/\overline{H}^{sph}$.

\begin{thm}\label{eff cone2}
Let $X$ be a $\mathbb{Q}$-factorial complete symmetric variety. Then:
\begin{enumerate}
\item The class of a $G$-stable prime divisor $E$ belongs to the
cone generated by the classes of colors if and only if  it is proportional to
some $[F_{\alpha}]$.
\item The class of a $G$-stable prime divisor $E$ is proportional to
$[F_{\alpha}]$ if and only if the irreducible factor $R$
of $R_{G,\theta}$  containing $\alpha$ is
orthogonal to $v_{E'}$ for any $E'\in \calN(X)$ different from $E$.
\item The class of a $G$-stable prime divisor $E$ is proportional to some
$[F_{\alpha}]$ if and only if there is a $G$-equivariant morphism
$\varphi:X^{\leq1}\rightarrow X'$ such that $X'$ is a wonderful
symmetric $G$-variety, $\varphi(E)\subsetneq X'$ and
$\varphi(E')=X'$  for each $G$-stable prime divisor $E'$ of $X$
different from $E$.
\item If  such a morphism exist, we can identify the restricted root system of
$X'$ with a product $R$ of irreducible factors of $R_{G,\theta}$.
Then $[E]$ is proportional to $[F_{\alpha}]$ for each simple root
$\alpha$ in $R$. We can also suppose that the stabilizer of
$\varphi(H/H)$ is generated by $\overline{H}^{sph}$ and some normal,
$\theta$-stable, connected subgroup of $G$. If moreover $X$ is
toroidal, then $\varphi$ can be extended to $X$,
$-v_{E}$ is a fundamental spherical weight and $X'$ is a product of wonderful symmetric varieties of rank one.
\item  $[$Lemma 2.3.2 of \cite{Br07}$]$ The class of a $G$-stable prime divisor  $E$ generates an extremal ray of $Eff(X)_\mR$ which does not contain the class of a color if and only if $dim\,H^0(X,\calO(mE))=1$ for each positive integer $m$.
\item $[$Lemma 2.3.3  of \cite{Br07}$]$ The class of a color   $F$ generates an extremal ray of $Eff(X)_\mR$ which does not contain the class of any $E\in \calN(X)$ if and only if there is a $G$-equivariant morphism $\varphi:X\rightarrow G/P$, where $P\supseteq H$ is a maximal parabolic subgroup, such that $F$ is the preimage of the Schubert divisor (the unique $B$-stable prime divisor) in $G/P$.
\end{enumerate}
\end{thm}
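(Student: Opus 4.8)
I need to prove Theorem \ref{eff cone2}, which has six parts. Parts (5) and (6) are cited as Lemmas from \cite{Br07}, so I treat them as known and focus on the four genuinely new statements (1)–(4). Let me think about what each requires and what the overall strategy should be.

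Part (2) is the computational heart: $[E]$ is proportional to $[F_\alpha]$ iff the irreducible factor $R$ of $R_{G,\theta}$ containing $\alpha$ is orthogonal to all $v_{E'}$ with $E' \in \calN(X)$, $E' \neq E$. From the discussion before the proof of Theorem \ref{eff cone1}, we have the relation $div(f_{\omega_\alpha}) = F_\alpha + \sum_{E' \in \calN(X)} (\omega_\alpha, v_{E'}) E'$, hence in $Pic(X)_\mR$ we have $[F_\alpha] = -\sum_{E'} (\omega_\alpha, v_{E'})[E']$. Since the $[E']$ are linearly independent (they form a basis of $\widetilde{Div}^G(X)_\mR$), $[F_\alpha]$ is proportional to $[E]$ iff $(\omega_\alpha, v_{E'}) = 0$ for every $E' \neq E$. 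So I must translate the condition "$(\omega_\alpha, v_{E'}) = 0$ for all $E' \neq E$" into "the irreducible factor $R$ containing $\alpha$ is orthogonal to $v_{E'}$". The point is that $\omega_\alpha$ is the fundamental spherical weight dual to $\alpha^\vee$, so $\omega_\alpha$ lives in the span of $R$ (a fundamental weight for $R$ is orthogonal to the coroots of the other factors but pairs nontrivially with $\alpha^\vee$). I expect the key lemma to be: a vector $v \in \chi_*(S)_\mR$ satisfies $(\omega_\beta, v) = 0$ for all simple roots $\beta \in R$ iff $v$ is orthogonal to the whole span of $R$, which holds because $\{\omega_\beta : \beta \in R\}$ spans the same subspace as $R$. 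This is where I expect the main obstacle: carefully matching the pairing $(\omega_\alpha, v_{E'})$ used in the relations with orthogonality in the restricted root space, keeping track of the identification of $\chi(S)_\mR$ with $\chi_*(S)_\mR$ via the Killing form.

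Part (1) should then follow: by Theorem \ref{eff cone1}, the cone generated by classes of colors is spanned by the $[F_\beta^{\pm}]$ and the $[F_\beta]$, and $[E]$ lies in this $G$-stable-accessible cone precisely when it is proportional to some $[F_\alpha]$—because $[E]$ generates an extremal ray of $\sigma = Eff(X) \cap Div^G(X)_\mR$, so if it lies in the color cone it must coincide (up to scalar) with the unique $G$-stable expression $[F_\alpha]$ that lives in $\sigma$. I would reduce this to the linear-algebra statement of part (2) together with the fact that the $[F_\beta]$ are the only classes of colors that land in $Div^G(X)_\mR$.

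For part (3) and the geometric refinement in (4), the plan is to construct the morphism $\varphi$ from the combinatorics. Given that $R$ is orthogonal to all $v_{E'}$ ($E' \neq E$), I would use Remark \ref{rem: fan spherical closure} and the decomposition machinery of Proposition \ref{spherical closure}: orthogonality of $R$ to the other $v_{E'}$ lets me split off the factor of $(G,\theta)$ corresponding to $R$ and map $X^{\leq 1}$ onto the wonderful (rank $|R|$) symmetric variety $X'$ for that factor, arranging that $E$ is the divisor not dominating $X'$ while all other $G$-stable divisors $E'$ map onto $X'$. The identification of the restricted root system of $X'$ with $R$ gives $[E] \propto [F_\alpha]$ for every simple $\alpha \in R$. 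Conversely, the existence of such a $\varphi$ forces $E$ to be the unique non-dominant divisor, which by pullback of functions (Remark \ref{rem: sezioni pullback}) forces the proportionality. The toroidal addendum in (4)—that $\varphi$ extends to $X$ and $-v_E$ is a fundamental spherical weight—follows because in the toroidal case $q: X^{\leq 1} \to X_0^{sph}$ extends to all of $X$, and the extremal generator $v_E$ of the $G$-invariant valuation cone corresponding to the rank-one factors is exactly a fundamental coweight, hence $-v_E$ a fundamental spherical weight.

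I expect part (2) to be the crux, since (1) is a formal consequence of it and the structure of $\sigma$, while (3)–(4) are a matter of assembling the spherical-closure decomposition (Proposition \ref{spherical closure}) and the toroidal correspondence of Section \ref{sez: toroidal} in the right order. The one subtlety I would watch in (3)–(4) is ensuring the target $X'$ is genuinely \emph{wonderful} and that the stabilizer of $\varphi(H/H)$ has the claimed form $\overline{H}^{sph}$ times a normal $\theta$-stable connected subgroup; this requires invoking that for the split-off factor one is in case $G^\theta = H = N_G(G^\theta)$ (the non-exceptional indecomposable situation from Section \ref{sez: N+Dg/h}), so that its standard completion is automatically wonderful.
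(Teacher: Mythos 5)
Your overall architecture coincides with the paper's own proof: part (2) from the relation $div(f_{\omega_\alpha})=F_\alpha+\sum_{E'\in\calN(X)}(\omega_\alpha,v_{E'})E'$ together with the linear independence of the classes $[E']$; part (1) from the extremality of $[E]$ in $\sigma=Eff(X)\cap Div^{G}(X)_{\mR}$; parts (3)--(4) from the spherical-closure decomposition (Proposition \ref{spherical closure}, Corollary \ref{compat spherical closure}) and the morphism $q\colon X^{\leq1}\rightarrow X_0^{sph}$ followed by the projection onto the factor corresponding to $R$. However, the key lemma you propose for part (2) --- which you yourself identify as the crux --- does not prove it, and this is a genuine gap rather than a bookkeeping issue.

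The relation only gives you the vanishing of the \emph{single} pairing $(\omega_\alpha,v_{E'})$, for the one fundamental spherical weight $\omega_\alpha$. Your lemma (``$(\omega_\beta,v)=0$ for \emph{all} simple $\beta\in R$ iff $v$ is orthogonal to the span of $R$'') requires vanishing against all the fundamental weights of the factor, which is not what you have; and for an arbitrary vector $v$ the implication ``$(\omega_\alpha,v)=0\Rightarrow v\perp R$'' is simply false, already when $R$ has rank two. What makes it true here is that the $v_{E'}$ are $G$-invariant valuations, hence lie in the antidominant chamber $C^-$ (\S\ref{sez: N+Dg/h}): writing $\omega_\alpha=\sum_{\beta}c_\beta\beta$ over the simple roots $\beta$ of the irreducible factor $R$, with all $c_\beta>0$ (strict positivity of the coefficients of a fundamental weight of an \emph{irreducible} root system), every term $(\beta,-v_{E'})$ is nonnegative, so $(\omega_\alpha,v_{E'})=0$ forces $(\beta,v_{E'})=0$ for every $\beta\in R$. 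This is exactly the paper's one-line step ``the $-v_{E'}$ are dominant coweights'', and it is also what yields the propagation claim in part (4) that $[E]\in\mR^{\geq0}[F_\alpha]$ implies $[E]\in\mR^{\geq0}[F_{\alpha'}]$ for every $\alpha'$ in the same factor. Without invoking $\calN\subset C^-$, your part (2) does not close, and your parts (1), (3), (4) are built on it. Two smaller points: in the converse of (3), Remark \ref{rem: sezioni pullback} is not the relevant tool --- the paper factors $\varphi$ through $q$ and a projection using the fact that a symmetric subgroup containing $H$ and containing no normal connected subgroup of $G$ lies in $N_G(H)$ (\cite{dCP1}, Lemma 1.7); and the wonderfulness of $X'$ comes from spherical closedness of the target's isotropy group, not from being in the case $G^\theta=H=N_G(G^\theta)$.
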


Before to prove the theorem, we do some remarks.

\begin{rem}\label{rem: commento eff cone 2}
Observe that given an irreducible factor $R$ of $R_{G,\grt}$, there is always an $E\in
\calN(X)$ with $(v_{E},R)\neq 0$ because of the completeness
of $X$. The statement of the third point is very similar to that  of
Lemmas  2.3.4 in
\cite{Br07}.
\end{rem}

\begin{rem}\label{rem: canditato phi} Write $R_{G,\grt}$ as a product $\prod R_i$ of irreducible factors, then $X_0^{sph}$ is a product $\prod X_{R_i}$ by Corollary \ref{compat spherical closure}. In the proof we show that to check if $[E]$ belongs to $\mR^{\geq0}[F_\gra]$ with $\gra\in R_{i_0}$ is sufficient to check if $\pi_{i_0}\circ q:X^{\leq1}\rightarrow X_{R_0}$ satisfies the conditions of  point (3).
\end{rem}

\begin{rem} In the setting of wonderful varieties, there are never two colors whose classes are proportional.
\end{rem}

\begin{rem}\label{rem: colori estremali}
In \cite{Br07} the Lemma 2.3.1 and 2.3.2 are stated for wonderful varieties, but their proof holds for any $\mathbb{Q}$-factorial complete spherical variety whose open orbit is sober, i.e. $N_G(H)/H$ is finite. In the setting of symmetric varieties, one can explicitly  construct the morphism of point (6). Let $F$ be a color as in the point (6) and let   $\alpha^\cech$ be  $\rho(F)$. Then, by Theorem \ref{eff cone1},  $\rho^{-1}(\alpha^\cech)$ contains two colors and $\gro_\gra$ is  the sum $\gro_1+\gro_2$ of two (possibly equal) fundamental weights of $G$. We can also suppose that $G^\grt=P(\gro_1)\cap P(-\gro_1)$ and $\gro_2=-\varpi_o\gro_1$, where $\varpi_o$ is the longest element of $W_G$. Also the other color in $\rho^{-1}(\alpha^\cech)$ satisfies the conditions of (6) and the corresponding applications are the following:
\[ G/G^\grt\rightarrow G/P(\gro_1)\subset \mP(V(\gro_1))\]
\[ g\rightarrow g\cdot v_{\gro_1}\]
and
\[ G/G^\grt\rightarrow G/P(-\gro_1)\subset \mP(V(\gro_2))\]
\[ g\rightarrow g\cdot v_{-\gro_1}\]
where $v_\chi$ is a weight vector of weight $\chi$. Furthermore, $G/P(-\gro_1)$ is isomorphic to $G/P(\gro_2)$. There are some difference according to whether $\gra$ is exceptional or not.
If $\gra$ is exceptional then   $\gro_1$ is different from $\gro_2$; in particular the stabilizer of $F_\gra^+$ is different from the stabilizer of $F_\gra^-$. Moreover $\mP(V(\gro_1))$, resp. $\mP(V(\gro_2))$, contains a unique point fixed by $G^\grt$.

Instead, if  $\gra$ is non-exceptional then    $\gro_1=\gro_2$; in particular,  the stabilizer of $F_\gra^+$ is equal to the stabilizer of $F_\gra^-$. Moreover,  $\mP(V(\gro_1))$ contains two points fixed by $G^\grt$, namely  $v_{\gro_1}$ and $v_{\gro_1}$.
In this case there is an element $n$ of $N_G(H)\senza H$ which exchanges $F_\gra^+$ with $F_\gra^-$; moreover $n$ exchanges $v_{\gro_1}$ with $v_{-\gro_1}$.
\end{rem}

\begin{proof}[Proof of Theorem \ref{eff cone2}.] We have already showed the first point in the proof of the
previous theorem. First, we will prove the point (2). Then we will  use it to prove the points (3) and (4). We will use also the Corollary \ref{compat spherical closure} to find an explicit candidate for the application $\varphi$ (see also Remark \ref{rem: canditato phi}).  Because
$div(\omega_{\gra})=F_{\alpha}+$ $\sum_{E\in \calN(X)}$ $
(\omega_{\gra},v_{E})E$,  $[E]$ is proportional to $[F_\alpha]$
if and only if $(\omega_{\gra},v_{E'})=0$ for any $E'\neq E$. But the
$-v_{E'}$ are dominant coweights. Thus,   if
$[E]\in \mR^{\geq0}[F_{\alpha}]$ and $\gra'$ belongs to the  irreducible factor of
$R_{G,\theta}$ containing $\alpha$, then
$[E]\in \mR^{\geq0}[F_{\alpha'}]$.

Given an irreducible factor  $R$ of $R_{G,\grt}$,  we
can write, by Corollary \ref{compat spherical closure}, $X_{0}^{sph}=X_{1}\times X_{2}$ where the $X_{i}$ are
wonderful varieties and the restricted root system of $X_{2}$ is
equal to $R$. Given any $v\in C^{-}$ and $\gra\in R$, $(\omega_{\gra},v)=
0$ if and only if, for any $\alpha'\in R$, $-\omega_{\gra'}^{\vee}$
is not contained in the face of $C^{-}$ whose relative interior
contains $v$. Thus, $[E]$ is proportional to $[F_{\alpha}]$ if
and only if the following condition $(*)$ holds: if $K\in
\calN(X^{sph}_{0})$ contains the image of a $G$-stable prime divisor of
$X^{\leq1}$ different from $E$, then $K$ has the form $K'\times
X_{2}$ with $K'\in \calN(X_{1})$. Hence, if $[E]$ is proportional to
$[F_{\alpha}]$, then the projection on $X_{2}$ of any $q(E')$ with $E'\neq E$  is the
whole $X_{2}$.

Vice versa suppose that exists a morphism $\varphi$ as in the
statement. By the Corollary \ref{spherical closure} and by the
description of morphisms between spherical varieties, we can suppose
that the stabilizer of $\varphi(H/H)$ is  generated by
$\overline{H}^{sp}$ and some normal, $\theta$-stable connected
subgroup of $G$ (we may have to compose or to lift $\varphi$ with a
finite equivariant morphism). We need also the following property:
if $H'$ is a symmetric subgroup of $G$ which contains $H$ and does
not contain any normal connected subgroup of $G$, then $H'\subset
N_{G}(H)$ (see \cite{dCP1}, Lemma 1.7). Therefore, $\varphi$ is the
composite of $q:X^{\leq
1}\rightarrow X_{0}^{sph}$ with a projection $X_{0}^{sph}=X_{1}\times
X_{2}\rightarrow X_{2}$. Now, the hypotheses on $\varphi$ implies
the condition $(*)$.

Finally, if $X$ is toroidal then $-\omega_{\gra}^{\vee}$ is contained
in $\calN(X)$ for each $\gra\in\ol{R}_{G,\grt}$, so $[F_{\alpha}]$ can be proportional only to
$E_{-\omega_{\gra}^{\vee}}$. \end{proof}

\begin{rem}\label{rem: riduzione a G stable} Let $X$ be any projective symmetric variety (possibly non $\mathbb{Q}  $-factorial)
and let  $D=$ $\sum_{F\in D(G/H)}$ $ a_{F}F+ \sum_{E\in \calN(X)}b_{E}E$ be an effective  Cartier divisor on $X$, so $a_{F},b_{E}\geq0$. Up to exchanging some $F_{\alpha_{i}}^{+}$ with $F_{\alpha_{i}}^{-}$, there
is an effective divisor $D'=D_1+D_2$  linearly equivalent to $D$ and such that: i) $D_{1}$
is $G$-stable and effective;
ii) $D_{2}=\sum a^{+}_{i}F_{\alpha_{i}}^{+}$ with $a^+_{i}\geq0 $ for each $i$.
Moreover, $D$ is nef (resp. big) if and  only if $D_{1}$ is  nef (resp. big). Indeed, we can suppose $X$ toroidal by taking the pullback of these line bundles to a desingularization $X'$ of $X^{dec}$. Then  $D$ is  nef (resp. big) if and  only if $D'$ is   nef (resp. big). Moreover,
$h^{D'}=h^{D_{1}}$ and the coefficients of $D'$ with respect to the $F_{\alpha} ^{-}$ are all zero (and lesser than the coefficients of $D'$ with respect to the $F_{\alpha} ^{+}$). Finally, if $D'=\sum_{F\in D(G/H)^{H}}\, c_{F}F
+\sum d_{\gra}^{+}F_{\alpha}^{+}+ \sum d_{\gra}^{-}F_{\alpha}^{-}+\sum_{E\in \calN(X)} f_{E}E$, the dimension of $H^{0}(X,\mathcal{O}(D'))$ and the combinatorial conditions on the nefness of $D_{1}$ depend only on the linear functions $h^{D_{1}}_\calC$,  on the $c_{F}(=0)$ and on the $min\{d_{\gra}^{+},d_{\gra}^{-}\}(=0)$.
\end{rem}

\begin{cor}\label{eff cone of singular X} Let $X$ be a projective symmetric variety (possibly non $\mathbb{Q}  $-factorial).
Then $Eff(X)$ is the intersection of $Pic(X)_{\mathbb{R}}$ with the
polyhedral cone of $Cl(X)_{\mathbb{R}}$ whose extremal rays are
generated by the classes of the colors not in $D(G/H)^{H}$ and by
the classes of the $G$-stable prime divisors which are not linearly
equivalent to a multiple of $[F_{\alpha}]$ with
$F_{\alpha}\notin\, D(G/H)^{H}$.   Moreover, the statement of
Theorem \ref{eff cone2}  holds   again.
\end{cor}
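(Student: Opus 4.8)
The plan is to prove the corollary without any desingularization, by observing that the arguments establishing Theorems \ref{eff cone1} and \ref{eff cone2} live entirely inside the class group and invoke $\mathbb{Q}$-factoriality only in order to identify $Pic(X)$ with $Cl(X)$. I would work throughout in $Cl(X)_\mathbb{R}$ and let $\overline\sigma$ denote the effective cone of Weil divisors, that is, the cone generated by the classes of all $B$-stable prime divisors (the colors together with the $G$-stable prime divisors). Since $X$ is complete, every effective divisor is linearly equivalent to a $B$-stable effective one, so $\overline\sigma$ is precisely the cone of classes of effective Weil divisors; being generated by finitely many classes, it is automatically closed and polyhedral, with no $\mathbb{Q}$-factoriality required.

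Next I would rerun the proof of Theorem \ref{eff cone1} with $Pic(X)_\mathbb{R}$ systematically replaced by $Cl(X)_\mathbb{R}$. The decomposition $Cl(X)_\mathbb{R}=\bigoplus_{E\in\calN(X)}\mathbb{R}[E]\oplus\bigoplus_{|\rho^{-1}(\alpha^\vee)|=2}\mathbb{R}[F_\alpha^+]$ holds because $Cl(X)$ is generated by the $B$-stable prime divisors modulo the relations $(*_{\omega_\alpha})$, and the identity $div(f_{\omega_\alpha})=F_\alpha+\sum_{E\in\calN(X)}(\omega_\alpha,v_E)E$ with antidominant $v_E$ still places every $[F_\alpha]$ in the simplicial cone $\sigma:=\overline\sigma\cap Div^G(X)_\mathbb{R}=\mathrm{cone}([E]\colon E\in\calN(X))$. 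The extremal-ray bookkeeping is then formally identical: each $[F_\alpha^+]$ spans an extremal ray because annihilating its coefficient in $F_\alpha^++div(f_{\omega_\alpha})$ forces the $F_\alpha^-$-coefficient to be $-1$, while each $[E]$ that fails to span an extremal ray of $\overline\sigma$ is a positive combination of the remaining generators and, being extremal in $\sigma$, must be proportional to some $[F_\alpha]$. This reproduces the asserted description of $\overline\sigma$: its extremal rays are generated by the colors not in $D(G/H)^H$ together with the $G$-stable prime divisors whose class is not a multiple of an $[F_\alpha]=[F_\alpha^++F_\alpha^-]$ with $F_\alpha\notin D(G/H)^H$.

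I would then identify $Eff(X)$ with $Pic(X)_\mathbb{R}\cap\overline\sigma$. Writing $\iota\colon Pic(X)_\mathbb{R}\hookrightarrow Cl(X)_\mathbb{R}$ for the natural inclusion, the containment $Eff(X)\subseteq Pic(X)_\mathbb{R}\cap\overline\sigma$ is immediate, since an effective Cartier divisor is in particular an effective Weil divisor. For the reverse containment I would use that $Pic(X)_\mathbb{R}\cap\overline\sigma=\iota^{-1}(\overline\sigma)$ is a rational polyhedral cone, hence generated by finitely many rational classes $[D_j]\in Pic(X)_\mathbb{Q}$. For each such $[D_j]$ a suitable positive multiple $m_jD_j$ is an honest Cartier divisor lying in $\overline\sigma$, so it is linearly equivalent to an effective Weil divisor $W_j$; as $W_j\sim m_jD_j$ with $m_jD_j$ Cartier, $W_j$ is itself Cartier and effective, whence $[D_j]\in Eff(X)$. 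Since $Eff(X)$ is a convex cone it then contains all of $\iota^{-1}(\overline\sigma)$, yielding $Eff(X)=Pic(X)_\mathbb{R}\cap\overline\sigma$ and with it the stated description of the effective cone.

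For the final assertion that Theorem \ref{eff cone2} holds again, I would point out that each of its statements concerns only proportionality relations among the classes $[E]$ and $[F_\alpha]$ in $Cl(X)_\mathbb{R}$ together with the existence of certain $G$-equivariant morphisms, and that their proofs rest on the relation $div(f_{\omega_\alpha})$, the antidominance of the $-v_E$, the structure of the spherical closure from Corollary \ref{compat spherical closure}, and the morphism $q\colon X^{\leq1}\rightarrow X_0^{sph}$ --- none of which used $\mathbb{Q}$-factoriality. Hence those proofs apply verbatim. The one place where Theorem \ref{eff cone1} genuinely invoked $\mathbb{Q}$-factoriality was to force $\rho$ injective on $\calF(X)$; the point I would emphasize is that this injectivity is immaterial in $Cl(X)_\mathbb{R}$, where distinct colors are always distinct classes, so discarding it costs nothing. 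I expect the only mildly delicate step to be the passage from $\mathbb{Q}$- to $\mathbb{R}$-classes and the reconciliation of Cartier- with Weil-effectivity in the previous paragraph; everything else is a faithful transcription of the $\mathbb{Q}$-factorial arguments into the class group.
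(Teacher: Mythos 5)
Your proof is correct, but it takes a genuinely different route from the paper. The paper does \emph{not} argue directly in the class group: it reduces to Theorem \ref{eff cone1} by constructing a $\mathbb{Q}$-factorial complete symmetric variety $X'$ together with an equivariant morphism $\psi\colon X'\rightarrow X$ that is an isomorphism in codimension one (so $Cl(X')\cong Cl(X)$ and $\psi^{*}Eff(X)=Eff(X')\cap\psi^{*}Pic(X)_{\mathbb{R}}$), and the bulk of its proof is the combinatorial construction of $X'$: replacing $X$ by an auxiliary variety $\widetilde{X}$, producing an ample divisor linearly equivalent to a $G$-stable one whose moment polytope contains $0$ in its interior, passing to the polar polytope, triangulating it by ``pushing'' vertices (so that no new rays appear, which is what guarantees the isomorphism in codimension one), and assigning colors consistently. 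You bypass all of this by observing that the generators-and-relations description of $Cl(X)$, the relation $div(f_{\omega_{\alpha}})=F_{\alpha}+\sum(\omega_{\alpha},v_{E})E$, and the antidominance argument never use $\mathbb{Q}$-factoriality, so Theorem \ref{eff cone1} holds verbatim for the cone $\overline{\sigma}$ of effective Weil classes; you then recover $Eff(X)=Pic(X)_{\mathbb{R}}\cap\overline{\sigma}$ from the elementary facts that this intersection is a rational polyhedral cone and that linear equivalence preserves the Cartier property. What each approach buys: yours is shorter and more elementary; the paper's yields a geometric byproduct of independent value --- a small $\mathbb{Q}$-factorialization $X'\rightarrow X$ --- which is reused elsewhere (e.g.\ it is invoked in the proof of Proposition \ref{bigness of toroidal}) and which makes the transfer of Theorem \ref{eff cone2} immediate, since that theorem only depends on $X^{\leq1}$ and $(X')^{\leq 1}\cong X^{\leq 1}$.

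Two small details you glossed over, neither fatal. First, $Cl(X)$ may have torsion, so an integral class lying in $\overline{\sigma}$ need only have a \emph{multiple} that is the class of an honest effective Weil divisor (clear denominators in the rational nonnegative combination, then kill the torsion discrepancy by a further multiple); your choice of $m_{j}$ should be made large and divisible enough to absorb this. Second, for points (5) and (6) of Theorem \ref{eff cone2} in the non-$\mathbb{Q}$-factorial setting, $E$ need not be $\mathbb{Q}$-Cartier, so the statements must be read in $Cl(X)_{\mathbb{R}}$ with $\mathcal{O}(mE)$ the reflexive sheaf of the Weil divisor $mE$; the proofs do go through in that reading (the key inputs being that effective Weil divisors are linearly equivalent to $B$-stable ones and that $G$-stable prime divisors are linearly independent in $Cl(X)$), but this interpretive point deserves a sentence --- the paper sidesteps it by transporting the statements along $(X')^{\leq1}\cong X^{\leq1}$.
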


\begin{proof}[Proof of Corollary \ref{eff cone of singular X}.] To prove the corollary it is sufficient to show that
there is a $\mathbb{Q}$-factorial complete symmetric variety $X'$
and an equivariant morphism $\psi: X'\rightarrow X$ which induces an
isomorphism between $(X')^{\leq1}$ and $X^{\leq1}$; in particular
$\psi$ induces an isomorphism between $Cl(X)$ and $Cl(X')$. Moreover
$\psi^{*}Eff(X)=Eff(X')\cap \psi^{*}Pic(X)_{\mathbb{R}}\subset
Cl(X)_{\mathbb{R}}$
and $Pic(X)_{\mathbb{R}}\cong \psi^{*}Pic(X)_{\mathbb{R}}\subset Pic(X')_{\mathbb{R}}=Cl(X')_{\mathbb{R}}\cong Cl(X)_{\mathbb{R}}$.
Observe that the Theorem \ref{eff cone2} depends only on
$X^{\leq1}$.

Now, we will construct $X'$. The procedure will be more complicated if $X$ is neither non-exceptional nor toroidal. We need to define a new variety $\wt{X}$, isomorphic to $X$ in codimension 1: let $\mbF(\wt{X}):=\{(\calC,\wt{\calF}): \  (\calC,\calF)\in\mbF(X)\}$, where $\wt{\calF}:=\rho(\rho^{-1}(\calF))$, and let $\wt{X}$ be the corresponding variety. Remark that if $X$ is  non-exceptional or toroidal, then it is equal to $\wt{X}$.
We have a morphism $p:X\rightarrow \wt{X} $ which is an isomorphism between  $X^{\leq1}$ and  $\wt{X}^{\leq1}$, thus it is sufficient to find a variety $\varphi:X'\rightarrow \wt{X}$ over $\wt{X}$ such that: 1) $\varphi$ factorizes by $p$ and 2) $\grf$ is an isomorphism in codimension 1.

First, we define the fan $\mbF^f(X')$ associated to $X'$. The idea is the following: the cones in $\mbF^f(\wt{X})(=\mbF^f(X))$ are generated by some faces of an appropriate polytope in $\chi_*(S)$ (which is the polar polytope of the moment polytope of  an ample bundle $D$ over $\wt{X}$); we triangularize the faces of such polytope and define $\mbF^f(X')$ as the cones generated by the simplices obtained from the previous faces.

To define the previous polytope we need an ample Cartier divisor  $D$  over $\wt{X}$ such that: i) the interior of  $P(D)$ contains 0 and ii) $D$ is linearly equivalent to a $G$-stable divisor. We have defined $\wt{X}$ to assure the existence of such a divisor. Now, we will find it;  let $D'$ an ample Cartier divisor on $X$. As in the Remark \ref{rem: riduzione a G stable} we can write $D'=D_{1}+D_{2}+D_{3}$ where 1) $D_{1}$ is linearly equivalent to a  $G$-stable divisor; 2) $D_{2}+D_{3}$ is a positive linear combination of the $F_{\alpha}^+$. Moreover, we can suppose that $D_{2}$ is $\sum_{\gra\in I} a_{\alpha}^+F_\gra^+$,  where
$I$ is the set of $\gra$ such that  $F_{\alpha}^+$ belongs to  $\calF(X)$ (and $D_3$ is $\sum_{\gra\notin I} a_{\alpha}F_\gra^+$). \vspace{0.3 mm} One can easily show that $D'':=D_{1}+D_2+\sum_{\gra\in I} a_{\alpha}^+F_\gra^-$ is an ample divisor over $\wt{X}$. Indeed, $h^{\wt{X},D'}=h^{X,D'}=h^{X,D_1+D_2}$ and the minimum of the coefficients of $D''$ w.r.t. the colors in $\rho^{-1}(\gra^\cech)$ is lesser than the corresponding minimum for $D'$. Remark that we have used the fact that $X^{\leq1}$ is isomorphic to $\wt{X}^{\leq1}$, so $Cl(X)\cong Cl(\wt{X})$ (but $Pic(X)$ can be non-isomorphic to $Pic(\wt{X})$). Then $D^{(3)}:=D''+\sum_{\gra\notin \rho(\calF(\wt{X}))}F_\gra$ is ample over $\wt{X}$ and is linearly equivalent to a $G$-stable effective divisor $D^{(4)}$ such that $h^{D^{(4)}}(v_E)>0$ for each $E\in \calN(\wt{X})$.  Indeed,  none irreducible factor of $R^\cech_{G;\grt}$ can be contained in $span\{\rho(\calF(X))\}$ because the colored cones are strictly convex.  Therefore we can choose $D$ as $n D^{(4)}+\sum_{\gra\notin \rho(\calF(\wt{X}))}F_\gra+\sum_{\gra\in \rho(\calF(\wt{X})) }(F_\gra-\sum_{\calN(\wt{X})}v_E(\gro_\gra)E)$ with $n>>0$.

Let $P$ be the polar polytope of $P(D)$, i.e. $\{n\in \chi_{*}(S)_{\mathbb{R}}: m(n)\geq-1 \ \forall m\in P(D)\}$.
Then, given any cone $\calC$ in the
fan $\mbF^f(\wt{X})$ of $\widetilde{X}$, $\calC$ is generated by an appropriate face of $P$. Observe
that there are faces of $P$ which are associated to none colored
cone of $\widetilde{X}$.  Let $A'$ be the set of vertices of $P$ and set
$A=A'\cup\{0\}$. We need  to give a triangulation of $P$ with vertices in $A$.
Given a polytope $Q$ generated by the points $S=\{q_{1},...,q_{m}\}$, a \textit{subdivision} of $Q$ with vertices in $S$ is a finite collection $\{Q_{1},...,Q_{r}\}$ of polytopes such that: i)
$Q$ is the  union  $\bigcup Q_{i}$; ii) the
vertices of each $Q_{i}$ are drawn from $S$; iii) if $i\neq j$ then
$Q_{i}\cap Q_{j}$ is a common (possibly empty) face of the
boundaries of $Q_{i}$ and $Q_{j}$.  If all the $Q_{i}$ are simplices, the subdivision is called a \textit{triangulation}.  Before to define the desired triangulation of $P$, we
need to define an elementary construction step.

Let $Q$ be a $n$-dimensional polytope in $\mathbb{R}^{n}$, let $F$ be a $(n-1)$-dimensional face of
$Q$, let $H$ be the unique
hyperplane  containing $F$ and let $v$ be a point in $\mathbb{R}^{n}$.  The polytope $Q$ is contained in
exactly one of the closed halfspaces determined by $H$. If $v$ is
contained in the opposite open halfspace, then $F$ is said to be
\textit{visible} from $v$. If $Q$ is a $k$-dimensional polytope in $\mathbb{R}^{n}$ with
$k<n$ and $v\in Aff(Q)$, then the above definition can be  modified in
the obvious way, so that everything is considered relative to the
ambient space $Aff(Q)$. Suppose $S=\{Q_{1},...,Q_{m}\}$ is a
subdivision of a $n$-dimensional polytope $Q=convex(V)$ in
$\mathbb{R}^{n}$ and let $v\in V$. The result of \textit{pushing} $v$ is, by definition, the
subdivision $S'$ of $Q$ obtained by modifying the $Q_{i}\in S$ as
follows:

\begin{itemize}
\item If $v\notin\, Q_{i}$, then $Q_{i}\in S'$.
\item If $v\in Q_{i}$ and $convex(vert(Q_{i})\senza \{v\})$ is
$(n-1)$-dimensional (i.e. $Q_{i}$ is a pyramid with apex $v$), then
$Q_{i}\in S'$.
\item If $v\in Q_{i}$ and $Q_{i}':=convex(vert(Q_{i})\senza \{v\})$ is
$n$-dimensional, then $Q'_{i}\in S'$. Also, if $F$ is any
$(n-1)$-dimensional face of $Q_{i}'$ that is visible from $v$, then
$convex(F\cup\{v\})\in S'$.

\end{itemize}

Let $Q=convex(V)$ and order the point of $V=\{v_{1},...,v_{m}\}$ in an arbitrary way; then
the subdivision obtained by starting with the trivial one and
pushing the points of $V$ in that order is a triangulation (see \cite{Le97},
\S14.2). Returning to our problem, let $P$ and $A$ as the first part of the proof and
order the points of $A$ so that 0 is the first point.
Let $T$ be the triangulation
of $P$ obtained from the trivial subdivision by  pushing the points of $A$ in the chosen order. This triangulation induces a triangulation
of the (proper) faces of $P$. Let $\{T_{i}\}_{i\in I}$ be the set of $(s-1)$-dimensional simplices
obtained in such way, where $s=rank\,\chi_*(S)$, and let $I'\subset I$ be the family of simplices
whose relative interior intersects $C^{-}$. Given $i\in I'$, let $\calC_{i}$ be the cone generated by $T_{i}$.
We want to define $\mbF(X')$ so that $\mbF^f(X')$ is composed by the faces of all the $\calC_{i}$ with $i\in I'$. Such set is a fan by the definition of subdivision of a polytope; moreover its support is the same of the one of $\mbF(\wt{X})$

For each $i\in I'$, we define $\calF_{i}$ as follows. For each
$\alpha^{\vee}$ in $T_{i}$,  choose a color $E_{\alpha}$ in $\calF\cap
\rho^{-1}(\alpha^{\vee})$, where   $(\calC,\calF)$ is  the ($s$-dimensional) colored cone of $X$ containing $\calC_{i}$. Finally, define $\calF_{i}$ as the
set of such $E_{\alpha}$. If  $\alpha_{j}^{\vee}$ is contained in two different simplices, say
$T_{i}$ and $T_{j}$,
then we choose the same  $E_{\alpha}$ for both $\calF_{i}$ and $\calF_{j}$. Remark that we need to work with the colored cones of $X$ because we want that $X'$ dominates not only $\wt{X}$, but also $X$. The previous choices are possible because of the combinatorial conditions for the ampleness of a Cartier divisor $D'$ on $X$. Indeed, suppose by contradiction that there are a simple restricted root $\alpha$ and two colored cones of $X$, say $(\calC_{1},\calF_{1})$,  $(\calC_{2},\calF_{2})$,  such that $F_{\alpha}^{+}\in \calF_{1}\senza \calF_{2}$ and $F_{\alpha}^{-}\in \calF_{2}\senza \calF_{1}$. Write $D'=\sum_{D(G/H)^{H}} a_{F}F
+\sum b_{\grb}^{+}F_{\grb}^{+}+ \sum b_{\grb}^{-}F_{\grb}^{-}+\sum_{ \calN(X)} c_{E}E$  and let $h$ be the convex function associated to $D'$. Then $b_{\gra}^{+}=b_{\gra}^{-}=h(\alpha^{\vee})$ because $F_{\alpha}^{+}, F_{\alpha}^{-}\in \calF(X)$. Moreover $b_{\gra}^{+}=h_{\calC_{1}}(\rho(F_{\alpha}^{+}))=h_{\calC_{1}}(\rho(F_{\alpha}^{-}))<h(\rho(F_{\alpha}^{-}))=h(\alpha^{\vee})$, a contradiction.
Thus $\{(\calC_{i},\calF_{i}):
i\in I'\}$ is a colored fan and the associated symmetric variety
satisfies the requested properties. \end{proof}

\section{Bigness of   $\mathbb{Q}$-divisors on a complete symmetric
variety}\label{sez: bigness}

First we describe the big cone of any $\mQ$-factorial, projective symmetric variety. Then we will prove two criterions for a nef ($G$-stable) divisor to be big.

\subsection{The big cone}\label{sez: big cone}

\begin{thm}\label{bigg cone}
Let $X$ be a projective, $\mQ$-factorial symmetric variety. Then $Big(X)$ is the union of the following cones (whose closure is simplicial):  $\bigoplus_{E\in \calN(X)}\mR^{>0}[E]\oplus \bigoplus_{\gra^\cech \notin \rho(D(G/H)^H)}\mR^{\geq0} [F_\gra^\bullet]$, where the $F_{\gra}^\bullet\in\rho^{-1}(\gra^\cech)$ are chosen in all the ways possible.
\end{thm}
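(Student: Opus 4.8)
The plan is to combine two facts already established in the excerpt: the description of $Eff(X)$ from Theorem \ref{eff cone1}, and the general fact that $Big(X)$ is the interior of $PE(X)$ (recorded in the proposition of \S\ref{sez: big cone} preamble), together with the observation that for a $\mQ$-factorial spherical variety $Eff(X)=PE(X)$ (stated at the end of \S\ref{sez: big cone}). So $Big(X)=\mathrm{int}\,Eff(X)$, and the whole problem reduces to computing the interior of the polyhedral cone whose extremal-ray structure is given by Theorem \ref{eff cone1}.

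First I would recall from the proof of Theorem \ref{eff cone1} the explicit coordinate description
\[
Pic(X)_\mR=\bigoplus_{E\in\calN(X)}\mR[E]\ \oplus\bigoplus_{\gra^\cech:\,|\rho^{-1}(\gra^\cech)|=2}\mR[F_\gra^+],
\]
with the only relations being $(*_{\gro_\gra})$: $[F_\gra]=\sum_{E\in\calN(X)}(\gro_\gra,-v_E)[E]$. Using these relations to eliminate the $[F_\gra]$ with $|\rho^{-1}(\gra^\cech)|=1$ (those in $D(G/H)^H$) and the $[F_\gra^-]$, one expresses every $B$-stable prime divisor class in terms of the free generators $\{[E]:E\in\calN(X)\}\cup\{[F_\gra^+]:\gra^\cech\notin\rho(D(G/H)^H)\}$. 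Since $Eff(X)$ is generated by these $B$-stable prime divisor classes, and since each $[F_\gra]$ lies in $\sigma=\mathrm{cone}([E]:E\in\calN(X))$ with antidominant coefficients, I would verify that $Eff(X)=\sum_{E}\mR^{\geq0}[E]+\sum_{\gra}\mR^{\geq0}[F_\gra^\pm]$. The key geometric point, already used in Theorem \ref{eff cone1}, is that the $[E]$ form a simplicial cone $\sigma$ and each pair $\{[F_\gra^+],[F_\gra^-]\}$ adds a ``tent'' over a facet of $\sigma$: the two rays $[F_\gra^+],[F_\gra^-]$ are distinct extremal rays whose sum $[F_\gra]$ lies inside $\sigma$.

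The heart of the argument is then to show that a class lies in the \emph{interior} of $Eff(X)$ precisely when, written in the above generators, its $[E]$-coordinates are all strictly positive and, for each exceptional/split $\gra$, \emph{at least one} of the two coordinates along $[F_\gra^+],[F_\gra^-]$ may be taken to vanish while the other is nonnegative — equivalently, the interior decomposes as the disjoint-type union over all choices $F_\gra^\bullet\in\rho^{-1}(\gra^\cech)$ of the cones $\bigoplus_E\mR^{>0}[E]\oplus\bigoplus_\gra\mR^{\geq0}[F_\gra^\bullet]$. I would establish this by a direct convexity analysis: because $[F_\gra^+]+[F_\gra^-]=[F_\gra]\in\mathrm{relint}\,\sigma$, any interior point of $Eff(X)$ can be pushed, using a small positive multiple of the relation $(*_{\gro_\gra})$, so as to annihilate whichever of $[F_\gra^+],[F_\gra^-]$ one prefers, absorbing the cost into strictly positive $[E]$-coordinates; conversely each listed cone has nonempty interior contained in $Eff(X)$ and together they exhaust a neighborhood of any such point.

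The main obstacle I expect is bookkeeping the two colors per exceptional root correctly, i.e.\ proving that the stated union is exactly the interior rather than something larger or smaller. Concretely, I must check that a point with a strictly positive coordinate on \emph{both} $[F_\gra^+]$ and $[F_\gra^-]$ can always be rewritten, via $(*_{\gro_\gra})$, in one of the listed normal forms (this is where $(\gro_\gra,-v_E)\geq0$ and the strict positivity of the $[E]$-coordinates are used), and dually that a point lying on a boundary facet $[E_0]=0$ of $\sigma$ cannot be interior. The simpliciality claim for the closure of each piece follows since $\{[E]\}\cup\{[F_\gra^\bullet]\}$ is, after one more use of the relations, a subset of a basis of $Pic(X)_\mR$ of the right cardinality; I would confirm the dimension count $|\calN(X)|+\#\{\gra^\cech\notin\rho(D(G/H)^H)\}=\dim Pic(X)_\mR$ to close the argument.
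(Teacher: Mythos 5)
Your overall route is the same as the paper's: identify $Big(X)$ with the interior of $Eff(X)$ (via $Eff(X)=PE(X)$ for $\mQ$-factorial spherical varieties and $Big(X)=\mathrm{int}\,PE(X)$), then analyse the cone of Theorem \ref{eff cone1} through the relations $(*_{\gro_\gra})$ and the antidominance of the $-v_E$; your easy inclusion (each listed cone lies in $Big(X)$) is essentially the paper's and is fine. The problem is the mechanism you propose for the hard inclusion $\mathrm{int}\,Eff(X)\subseteq\bigcup(\cdots)$: it rests on the claim that $[F_\gra]=[F_\gra^+]+[F_\gra^-]$ lies in the \emph{relative interior} of $\grs=\mathrm{cone}([E],\,E\in\calN(X))$. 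That claim is false in general: $[F_\gra]=\sum_E(\gro_\gra,-v_E)[E]$ and the coefficients $(\gro_\gra,-v_E)$ can vanish; by Theorem \ref{eff cone2}(2), $[F_\gra]$ can even be proportional to a single class $[E']$, i.e.\ lie on an extremal ray of $\grs$ (this already happens for a product of two rank-one symmetric spaces, where $(\gro_{\gra_1},v_{E_2})=0$). Your pushing argument needs full relative interiority so that folding a small multiple of $[F_\gra]$ into $\grs$ boosts \emph{all} the $[E]$-coordinates; when some $(\gro_\gra,-v_E)=0$ it boosts only part of them. (A smaller error: you cannot annihilate ``whichever of $[F_\gra^+],[F_\gra^-]$ one prefers''; only the smaller coefficient can be removed while keeping everything nonnegative, since reduced expressions are coordinates in a basis and hence essentially unique.)

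This degenerate case is exactly where your test ``all $[E]$-coordinates strictly positive'' becomes ill-posed and where the paper's proof does real work (its second case). If $[E']=t_\gra([F_\gra^+]+[F_\gra^-])$, then $[E']$ is \emph{not} an extremal ray of $Eff(X)$, nonnegative representations in the generators are not unique, and a big class can be presented with all $[E]$-coefficients zero: in the product example above, $[E_1]+[E_2]=[F_{\gra_1}^+]+[F_{\gra_1}^-]+[F_{\gra_2}^+]+[F_{\gra_2}^-]$ is big. So ``a point on the facet $[E_0]=0$ cannot be interior'' is not a statement about a given representation; it must be made for a well-chosen coordinate functional. The paper handles this by rewriting the class in a basis of genuine extremal rays $\{[E]:E\in J\}\cup\{[F_\gra^+]\}\cup\{[F_\gra^-]:\gra\in K\}$, proving positivity of those coefficients, and recovering $m_{E'}=r_\gra^-/t_\gra>0$. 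Alternatively, a uniform repair of your outline is: first reduce the representation (fold the minima into $\grs$) and rename $F_\gra^\pm$ so that only the $[F_\gra^+]$ occur; then the functional reading the $[E_0]$-coordinate in the basis $\{[E]\}\cup\{[F_\gra^+]\}$ takes the value $(\gro_\gra,-v_{E_0})\geq0$ on each $[F_\gra^-]$ and is $\geq0$ on every generator of $Eff(X)$, hence is a supporting functional and cannot vanish at an interior point. Without one of these two steps, your argument has a genuine gap precisely in the case singled out by Theorem \ref{eff cone2}.
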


To prove such theorem we will use Theorem \ref{eff cone1} plus the explicit expression of the relations $(*_{\gro_\gra})$.

\begin{proof} Let $I$ be $\ol{R}_{G,\grt}^\cech\senza\rho(D(G/H)^H)$. First, we prove that all the cones in the statement are contained in $Big(X)$. It is sufficient to prove that $\dot{\grs}:=\bigoplus_{E\in \calN(X)}\mQ^{>0}[E]$ is contained in $Big(X)$ because the sum of a big divisor with an effective one is big. Given any element $[D]$ of $\dot{\grs}$,  there are $r_{\alpha}$ such that $[D']:=[D]-\sum_I r_\gra [F_\gra]$ belongs to $\dot{\grs}$, because $\dot{\grs}$ is open and all the $[F_\gra]$ belongs to the closure $\grs$ of $\dot{\grs}$. Thus $[D]=[D']+\sum_I r_\gra [F_\gra]$ is big by Theorem \ref{eff cone1}.

Now, let $D$ be a big divisor. Up to exchanging some $F_\gra^+$ with the corresponding $F_\gra^-$, we can write $D=\sum_{E\in\calN(X)}n_E E+\sum_{\gra\in\ol{R}_{G,\grt}}n_\gra F_\gra+\sum_{\gra\in I}n_\gra^+F_\gra^+$ with positive coefficients. This divisor is linear equivalent to an effective divisor $D'=\sum_{E\in\calN(X)}m_E E+\sum_{\gra\in I}m_\gra^+F_\gra^+$, so it is sufficient to show that all the $m_E$ are strictly positive. There are two cases: i)  the class of any $G$-stable prime divisor generates an extremal ray of $Eff(X)$; ii) there is some $E$ linearly equivalent to a multiple of $F_\gra$ with $\gra\in I$ (see Theorem \ref{eff cone1}). In the first case, all the $m_E$ (and all the $m_\gra^+$) are strictly positive because $Big(X)$ is an open cone of dimension equal to $|\calN(X)|+|I|$ and all the vectors in the sum generate an extremal ray of $Eff(X)$. In the second case, some $[E]$ is equal to some $t[F_\gra^+]+t[F_\gra^-]$, so we can't use the same argument. Let $J$ be the set of $G$-stable prime divisors which generate an extremal ray of $Eff(X)$ and let $K\subset I$ be subset of   roots such that $[F_\gra]$ is proportional to some $[E]$ with $E\in \calN(X)$. Then we can write $[D]=\sum_{E\in J}m_E [E]+\sum_{I\senza K}m_\gra^+[F_\gra^+]+\sum_{  K}r_\gra^+[F_\gra^+]+\sum_{ K}r_\gra^-[F_\gra^-]$ and, as before, the coefficients must be strictly positive. Given $E'\notin J$ and $\gra\in K$ such that $[E']=t_\gra[F_\gra]$ with $t_\gra>0$, we have $r_\gra^+=t_\gra m_{E'}+ m_\gra^+$ and $r_\gra^-=t_\gra m_\gra^-$. Therefore $m_E'$ is strictly positive  for all $E'$.
\end{proof}

\subsection{Bigness of nef divisors}\label{sez: bigness + nef}

Now, we want to study the bigness of a fixed (nef) Cartier divisor.

\begin{rem}\label{rem: rid a toroid per big} We want to observe that to study the bigness of a fixed (Cartier)
$\mathbb{Q}$-divisor we can reduce to the case of a smooth toroidal
symmetric variety with $H$ wonderful. These are the smooth symmetric varieties proper  over a wonderful one.

First of all, we can reduce to the smooth toroidal case because of the Remark \ref{rem: sezioni pullback}.
Suppose now $X$ toroidal and let $X ^{sph}$ be the completion
of $G/\overline{H}^{sph}$ with the same colored fan of $X$ (see Remark \ref{rem: fan spherical closure}). Then $X ^{sph}$ dominates the wonderful
completion $X ^{sph}_{0}$ of $G/\overline{H}^{sph}$. We have unique
equivariant morphisms $\phi:X\rightarrow X ^{sph}$ and $X_{0}\rightarrow
X ^{sph}_{0}$ (which send $H/H$ to
$\overline{H}^{sph}/\overline{H}^{sph}$) and the pushforwards of
such morphisms are isomorphisms between their rational Picard
groups. In general the pushforwards are defined between the
(rational) class groups; in our case the pushforward define an isomorphism between the rational class groups which restricts to an isomorphism between the rational Picard groups.

Indeed, $dim\, Cl(X)_\mQ=dim\, Cl(X^{sph})_\mQ$ and $dim\, Pic(X)_\mQ=dim\, Pic(X^{sph})_\mQ$ because $D(G/\overline{H}^{sph})\equiv D(G/H)$
and  $X^{sph}$ \lq\lq has" the same colored fan of $X$ (forgetting the
lattice in $(\mathbb{C}(G/H)^{B}/\mathbb{C}^{*})_{\mathbb{Q}}$); in other words the rational Picard group does not depend on the lattice $\chi_*(S)$.
If $X^{sph}$ is smooth, then $\phi_{*}\circ \phi^*:Cl(X^{sph})_\mQ\rightarrow Cl(X^{sph})_\mQ$ is $(deg\,\phi)Id$ and $\phi^*(Pic(X^{sph})_\mQ)\subset Pic(X)_\mQ$, so the claim holds. In the general case we take a desingularization $\psi:\ol{X}^{sph}\rightarrow X^{sph}$ of $X^{sph}$ and  we define
$\ol{X}$ as the completion of $G/H$ with colored fan $\mbF(\ol{X}^{sph})$,  $\varphi:\ol{X}\rightarrow X$, resp. $\ol{\phi}:\ol{X}\rightarrow \ol{X}^{sph}$, as the obvious maps.  Then we regard to the subspace $\psi^*(Pic(X^{sph})_\mQ)$ of $Pic(\ol{X}^{sph})_\mQ$ (isomorphic to $Pic(X^{sph})_\mQ$) and use the following facts: i) $(\ol{\phi}^{\ *}\comp \psi^*)(Pic(X^{sph})_\mQ)=(\varphi^*\comp \phi^*)(Pic(X^{sph})_\mQ)$ is contained in $\varphi^*(Pic(X)_\mQ)$ and ii) $\phi_*\comp \varphi_*=\psi_*\comp \ol{\phi}_*$.

A line bundle $\mathcal{O}(D) $ on $X$ is big if and only if $\phi_{*}(\mathcal{O}(D))$ is big. Indeed if $\chi_{*}(T/T\cap \overline{H}^{sph})\subset \frac{1}{m} \chi_{*}(T/T\cap H)$, then $|P(rD)\cap \chi_{*}(T/T\cap H)|\leq| P(rD)\cap \chi_{*}(T/T\cap \overline{H}^{sph})|\leq |P(mrD)\cap \chi_{*}(T/T\cap H)|$ for each positive integer $r$ (see \S1.6
and   \cite{Br89}, \S3). The last inequality holds because the multiplication by $m$ defines an inclusion of  $P(rD)\cap \chi_{*}(T/T\cap \overline{H}^{sph})$ in  $P(mrD)\cap m\chi_{*}(T/T\cap \overline{H}^{sph})$ ($\subset P(mrD)\cap \chi_{*}(T/T\cap H)$).
\end{rem}

Let $X$ be a toroidal symmetric variety and suppose $X_{0}$ smooth.
Using the results of  \cite{Br89}, one can show that  a $G$-stable divisor  $D$ on $X$ is ample
(resp. nef) if and only if $\mathcal{O}(D)|Z^{c}$ is ample (resp. nef). Moreover,
this holds  if and only if the restrictions of $\mathcal{O}(D)$ to $Z$ and to the
closed orbits are ample (resp. nef). These last conditions can be
stated as  appropriate conditions on the function $h=\{h_{\calC}\}$
and on the weights $h_{\calC}$, where the $(\calC,\vuoto)$ are the maximal colored cones of $\mbF(X)$: 1) $D$ is nef if and  only if $h$ is convex and the $-h_{\calC}$ are spherical weights; 2)   $D$ is ample if and  only if $h$ is strictly convex and the $-h_{\calC}$ are regular spherical weights (i.e. they are  strictly dominant weights of $R_{G,\theta}$). We want to prove a similar condition for the bigness of any nef line bundle.
Observe that, given a closed $G$-orbit $\mathcal{O}_{\calC}$ of $X$ associated to a (maximal) colored cone $(\calC,\vuoto)$, the weight of the fiber of $\mathcal{O}(D)$ over the $B$-stable point of $\mathcal{O}_{\calC}$ is $-h_{\calC}$ because $D$ is $G$-stable (see \cite{Bi90}, \S2).
Suppose by simplicity $X$ toroidal. First we prove that a nef $G$-stable divisor is big if and only if its restriction to the associate complete toric variety $Z^c$ is big (see Proposition \ref{bigness of toroidal}). Then, we use the fact the such restriction is big if and only if $vol(Q(D))$ is strictly positive (see \S\ref{sez: picard} for the definition of $Q(D)$). For example, if $X$ is wonderful, $\grt$ is indecomposable and $D$ is a $G$-stable divisor with associated function $h$ one can easily prove the following description (recall that there is a unique  maximal colored cone  $(\calC,\vuoto)$):
\begin{itemize}
\item $D$ is ample if and only if $-h_{\calC}$ is spherical and regular (i.e. it is an strongly dominant weight of $R_{G,\grt}$);
\item $D$ is nef if and only if $-h_{\calC}$ is spherical;
\item $D$ is big and nef if and only if $-h_{\calC} $ is spherical and non-zero.
\end{itemize}
When $X$ is only toroidal, but  $\grt$ is again indecomposable, we will prove that $D$ is nef and big if and only the sum $-\sum_{(\calC,\vuoto)\in \mF(X)\ maximal} h_\calC$ is spherical and non-zero (see Theorem \ref{bigness e nef}).

\begin{rem}\label{rem: decolor e' proiettiva} Given a complete symmetric variety  $X$,   let $p:X^{dec}\rightarrow X$ be the decoloration of $X$ and
let $Z^c$ be the complete toric variety associate to $X^{dec}$. If $X$ is projective then $X^{dec}$ and $Z^c$ are projective. Indeed let
$D$ be an ample divisor on $X$, then $D':=p^*D+\sum_{F\in D(G/H)}F$ is ample on $X^{dec}$. Indeed, $h^{p^*D}=h^{D'}$ is equal to the restriction of $h^D$ on $C^-$. Thus, $(h^{D'}_\calC) (F)\leq a_{F}<a_F+1$ for each $(\calC,\vuoto)\in \mbF(X^{dec})$ and  $F\in D(G/H)$ (here $a_F$ is the coefficient of $D$ with respect to $F$; see also \S\ref{sez: picard}).
\end{rem}

\begin{prop}\label{bigness of toroidal}
Let $X$  be a  projective symmetric variety and let $D$
be a $B$-stable, Cartier $\mathbb{Q}$-divisor on $X$. If $D$ is big
then the restriction $\mathcal{O}(p^*D)|Z^{c}$ to the associated complete toric
variety $Z^{c}$  is big. Moreover, if $D$ is $G$-stable and
$\mathcal{O}(p^*D)|Z^{c}$ is big then $D$ is big.
\end{prop}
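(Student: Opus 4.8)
The plan is to reduce everything to the case in which $X$ is toroidal and $D$ is $G$-stable, and there to translate bigness into a statement about the dimension of a polytope. By Remark \ref{rem: sezioni pullback} the pullback $p^{*}D$ to the toroidal variety $X^{dec}$ is big if and only if $D$ is, and $\calO(p^{*}D)|Z^{c}$ is unchanged; so I first replace $X$ by $X^{dec}$ and assume $X$ toroidal. Recall from \S\ref{sez: picard} the two polytopes attached to a $G$-stable Cartier divisor: the polytope $P(D)\subset\chi(S)_{\mR}$ whose lattice points index the highest weights occurring in $H^{0}(X,\calO(D))$, and the $W_{G,\grt}$-invariant polytope $Q(D)=convex(w h_{\calC})$ whose lattice points index a basis of seminvariants of $H^{0}(Z^{c},\calO(D)|Z^{c})$, with $P(D)=C^{+}\cap Q(D)$. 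The whole proposition will follow from the single claim: for $G$-stable $D$, $D$ is big $\iff \dim P(D)=rank\,G/H\iff \dim Q(D)=rank\,G/H\iff\calO(D)|Z^{c}$ is big.

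The heart of the matter is an asymptotic dimension count. Using the description of $H^{0}(X,\calO(mD))$ as a multiplicity-free $G$-module I would write
\[\dim H^{0}(X,\calO(mD))=\sum_{\lambda\in\, m P(D)\cap\chi(S)}\dim V(\lambda),\]
and estimate the right-hand side via the Weyl dimension formula. For $\lambda\in\chi(S)_{\mR}$ the factors indexed by roots fixed by $\theta$ are identically $1$ (such roots pair to zero with the $(-1)$-eigenspace), so $\dim V(\lambda)$ is a polynomial in $\lambda$ of degree $d_{0}:=|\{\beta\succ0:\theta(\beta)\neq\beta\}|$. By the structure $U\cong R_{u}P\times Z_{0}$ of \S\ref{sez: toroidal}, with $R_{u}P=\prod_{\beta\succ0,\,\theta(\beta)\neq\beta}U_{\beta}$ and $\dim Z_{0}=rank\,G/H$, one gets $d_{0}=\dim X-rank\,G/H$. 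Combining the trivial upper bound $\dim H^{0}(X,\calO(mD))\le|mP(D)\cap\chi(S)|\cdot\max_{mP(D)}\dim V=O(m^{\,d_{0}+\dim P(D)})$ with a matching lower bound obtained by summing $\dim V(\lambda)$ over the lattice points in a fixed full-dimensional box inside the relative interior of $P(D)\cap\mathrm{int}(C^{+})$ (where every factor $\langle\lambda,\grb^{\cech}\rangle$ is of size $\sim m$), I find that the growth order of $\dim H^{0}(X,\calO(mD))$ equals $d_{0}+\dim P(D)$ as soon as $P(D)$ meets the open chamber, and is strictly smaller than $\dim X$ otherwise. By Proposition \ref{cns D big} this proves $D$ big $\iff\dim P(D)=rank\,G/H$.

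To pass between $P(D)$ and $Q(D)$ I would use that $Q(D)$ is $W_{G,\grt}$-invariant: if $Q(D)$ is full-dimensional its interior meets $\mathrm{int}(C^{+})$, so $\dim(C^{+}\cap Q(D))=\dim P(D)=rank\,G/H$, while conversely $P(D)\subseteq Q(D)$; hence $\dim P(D)=rank\,G/H\iff\dim Q(D)=rank\,G/H$. On the complete toric variety $Z^{c}$ one has $\dim H^{0}(Z^{c},\calO(mD)|Z^{c})=|mQ(D)\cap\chi(S)|$, which grows like $m^{\dim Q(D)}$, so by Proposition \ref{cns D big} the bundle $\calO(D)|Z^{c}$ is big exactly when $\dim Q(D)=rank\,G/H=\dim Z^{c}$; this already gives both implications for $G$-stable $D$, in particular the \lq\lq moreover'' assertion. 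For the first assertion with $D$ merely $B$-stable, note that a big divisor is, up to linear equivalence and rescaling, effective and $B$-stable; then Remark \ref{rem: riduzione a G stable} yields $D\sim D_{1}+D_{2}$ with $D_{1}$ $G$-stable and $D$ big $\iff D_{1}$ big. Since $D_{2}$ is supported on colors its function $h^{D_{2}}$ vanishes, so $\calO(p^{*}D_{2})|Z^{c}$ is trivial in $Pic(Z^{c})_{\mQ}$; hence $\calO(p^{*}D)|Z^{c}$ and $\calO(p^{*}D_{1})|Z^{c}$ have the same polytope, and the $G$-stable case applied to $D_{1}$ finishes the proof.

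The main obstacle will be the degree computation $d_{0}=\dim X-rank\,G/H$ together with the two-sided asymptotic estimate of $\dim H^{0}(X,\calO(mD))$: one must verify that the roots fixed by $\theta$ genuinely drop out of the Weyl formula on $\chi(S)_{\mR}$, and that the lower bound actually attains the order $m^{\dim X}$, which requires $P(D)$ to meet the \emph{open} dominant chamber in full dimension rather than lying on its walls. The remaining toric computation and the reductions are then routine given the results quoted in \S\ref{sez: picard} and \S\ref{sez: toroidal}.
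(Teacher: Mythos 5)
Your strategy is genuinely different from the paper's: instead of the paper's route (characterize bigness by ``$mD-A$ linearly equivalent to an effective divisor'' and analyze the kernel, image and a $G$-stable complement for the restriction $i^{*}:Pic(X)_{\mQ}\rightarrow Pic(Z^{c})_{\mQ}$), you count sections asymptotically, and your $X$-side computation (for $G$-stable $D$, bigness is equivalent to $\dim P(D)=rank\,G/H$, via the Weyl dimension formula and $d_{0}=\dim X-rank\,G/H$) is sound. However, your toric side has a genuine gap: the facts you invoke about $Q(D)=convex(wh_{\calC})$ --- that its lattice points index a basis of seminvariants of $H^{0}(Z^{c},\calO(D)|Z^{c})$ and that $P(D)=C^{+}\cap Q(D)$ --- are stated in \S\ref{sez: picard} only for \emph{globally generated} divisors, and they fail for divisors that are big but not nef, which is precisely the case the proposition must cover. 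For instance, on the blow-up of $\mP^{2}$ at a torus-fixed point the exceptional divisor $E$ has $\dim H^{0}(mE)=1$ for all $m$, while the convex hull of the local linear functions $h_{\calC}$ is a full-dimensional triangle; so ``$Q(D)$ full-dimensional $\iff$ big'' is false for non-convex $h$. The repair is to use the half-space section polytope of the $T$-stable divisor $i^{*}D$ on $Z^{c}$ (which computes $H^{0}$ of \emph{any} $T$-divisor): it is $W_{G,\grt}$-invariant, and its intersection with $C^{+}$ equals $P(D)$ because for $m$ dominant and $v_{E}$ antidominant one has $\min_{w}\langle m,wv_{E}\rangle=\langle m,v_{E}\rangle$; with this substitute your dimension comparison does go through.

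The second error is in your last step. The claim that $\calO(p^{*}D_{2})|Z^{c}$ is trivial in $Pic(Z^{c})_{\mQ}$ because $h^{D_{2}}=0$ confuses $Z$ with $Z^{c}$: colors restrict trivially to the open slice $Z$ (this is the split sequence $0\rightarrow Pic(X_{0})\rightarrow Pic(X)\rightarrow Pic(Z)\rightarrow0$), but \emph{not} to the complete toric variety $Z^{c}$. In fact $\ker i^{*}$ is generated by the differences $[F_{\gra}^{+}-F_{\gra}^{-}]$, and $i^{*}(2F_{\gra}^{\pm})$ is the restriction of the nonzero effective $G$-stable divisor $\sum_{E\in\calN(X)}(\gro_{\gra},-v_{E})E$; when $R_{G,\grt}$ is irreducible this restriction is even big on $Z^{c}$, although $F_{\gra}^{+}$ is never big on $X$ by Theorem \ref{bigg cone}. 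So your ``same polytope'' assertion is false, and if it were true it would prove that $i^{*}D$ big implies $D$ big for every $B$-stable $D$ --- contradicted by Remark \ref{rem: controes big toroid}. What your argument actually needs, and what is true, is only the one-sided statement that $i^{*}D_{2}$ is effective (no color closure contains $Z^{c}$, since $Z\cap\ol{F}=\vuoto$), so that $i^{*}D=i^{*}D_{1}+i^{*}D_{2}$ is big once $i^{*}D_{1}$ is; this must replace the triviality claim, and it also explains why the converse implication in the proposition requires $D$ to be $G$-stable.
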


\begin{proof}  By the previous discussion we can suppose that  $X$ is smooth and toroidal (see also the proof of the Corollary \ref{eff
cone of singular X}).  First, we describe the restriction $i^*Pic(X)_\mQ\rightarrow Pic(Z^c)_\mQ$, then we use the  Proposition \ref{cns D big} applied respectively to $X$ and $Z^c$. Under the previous assumption, we can define a linear map $i^{*}:Div^{B}(X)_{\mathbb{Q}}\rightarrow
Div^{T}(Z^{c})_{\mathbb{Q}}$ such that $[i^{*}(D)]=i^{*}([D])$ in
the following way: $ i^{*}: Div^{B}(X)_\mQ\twoheadrightarrow
Pic(X)_{\mathbb{Q}}\equiv Pic^{G}(X)_{\mathbb{Q}}\rightarrow
Pic^{T}(Z^{c})_{\mathbb{Q}}\equiv  Div^{T}(Z^{c})_{\mathbb{Q}}.$
Here $i:Z^{c}\hookrightarrow X$ is the inclusion and $Pic^{G}(X)$ is
the group of $G$-linearized line bundles. See \cite{Od88}, Proposition 2.1
for the last isomorphism. First we want to study the kernel and the image of $i^{*}$ by using some techniques similar to ones in \cite{V90} and in \cite{Bi90}.

Given a $G$-stable prime divisor $E$ on $X$ we define $E|Z$ as the
closure of $E\cap Z$ in $Z^{c}$, so $E|Z^{c}:=i^{*}(E)$ is $\sum_{w\in
W_{G,\theta}}wE|Z$ and has support $E \cap Z^{c}$. Hence, $E|Z^{c}$
and $E|Z$ are effective divisors on $Z^{c}$. Moreover, the
$T$-stable prime divisors on $Z^{c}$ are the $wE|Z$ with $E\in \calN(X)$
and $w\in W_{G,\theta}$ (actually $w$ is a fixed representant in
$N_{H^{0}}(T^{1})$ of the corresponding element in $W_{G,\theta}$). Let
$\pi:G\rightarrow G/H$ be the projection. The kernel of
$i^{*}:Pic(X)_{\mathbb{Q}}\rightarrow Pic(Z^{c})_{\mathbb{Q}}$ is
generated by the $[F_{\alpha}^{+}-F_{\alpha}^{-}]$ with
$\alpha \notin \rho (D(G/H)^{H})$. Indeed, let $\omega_{\gra}^{\pm}$ be
the $T$-weight of an equation of $\pi^{-1}(F_{\alpha}^{\pm})$,
then, for any $t\in T^{1}$,
 $(-\omega_{\gra}^{-})(t)=\theta(\omega_{\gra}^{+})(t)=
\omega_{\gra}^{+}(\theta(t))=\omega_{\gra}^{+}(t^{-1})=(-\omega_{\gra}^{+})(t)$,
so
$2\omega_{\gra}^{+}|T^{1}=2\omega_{\gra}^{-}|T^{1}=\omega_{\gra}|T^{1}$,
where $\omega_{\gra}$ is the  fundamental spherical weight  corresponding to $\gra$ (see
\cite{Ru2}, pages 6-8 and \cite{V90} \S3.3-3.4). In particular,
there is $Y_{\gra}\in Div^{G}(X)$ with
$i^{*}(2F_{\alpha}^{+})=i^{*}(2F_{\alpha}^{-})=i^{*}(Y_{\gra})$.
Observe that $Div^{G}(X)_{\mathbb{Q}}$ is a complement   to
$ker(i^{*})$, so  $i^{*}$ is injective over
$Div^{G}(X)_{\mathbb{Q}}$ and
$i^{*}(Pic(X)_{\mathbb{Q}})=i^{*}(Div^{G}(X)_{\mathbb{Q}})$.

Let $(Div\, ^{T}(Z^{c})_{\mathbb{Q}})^{W_{G,\theta}}$ be the subgroup of $W_{G,\theta}$-invariants  in
$Div\, ^{T}(Z^{c})_{\mathbb{Q}}$; we can identify this subgroup with its image in $Pic(Z^{c})_{\mathbb{Q}}$. Moreover, this image is
$i^{*}(Pic(X)_{\mathbb{Q}})=i^{*}(Div\, ^{G}(X)_{\mathbb{Q}})$.

Now, we prove the first statement; suppose  $D$ big. Then we can assume, up to linear
equivalence, that $mD=A+M$ where $m>>0$, $A$ is an ample, $B$-stable divisor
and $M$ is an effective, $B$-stable divisor. Then $mi^{*}(D) =i^{*}(A)
+i^{*}(M) $, with $i^{*}A $ ample. By Theorem \ref{eff cone1} we can write, up to linear equivalence, $M=\sum_{E\in \calN(X)}
a_{E}E+\sum b_{\gra}^{+}F_{\alpha}^{+}+\sum b_{\gra}^{-}F_{\alpha}^{-}$ with positive
coefficients. Thus $i^{*}(M)=\sum_{E\in \calN(X)}
a_{E}i^{*}(E)+\frac{1}{2}\sum (b_{\gra}^{+}+b_{\gra}^{-})i^{*}(Y_{\gra})$ is
effective, so $i^{*}(D)$ is big.

Vice versa, suppose that $i^{*}(D)$ is big and that  $D$ is $G$-stable. Fix  an ample, $G$-stable divisor $A$ on $X$,
then, for $m>>0$, $mi^{*}(D)-i^{*}(A)$ is linearly equivalent to an
effective, $T$-stable divisor $M'$ on $Z^{c}$.
We claim that we can
choose  $i^{*}(mD-A)$ as  $M'$.     Remark that $i^{*}(mD-A)$ is $W_{G,\theta}$-invariant.

Let $D_{1}$ and $D_{2}$ be $T$-stable divisors  on $Z^{c}$ such
that: 1) $D_{1}$ is $W_{G,\theta}$ invariant; 2) $D_{2}$ is linear
equivalent to 0; 3) $D_{1}+D_{2}$ is effective. We claim that $D_{1}$ is
effective. Indeed, suppose by contradiction $D_{1}$ non-effective
and write $D_{1}=\sum_{E\in \calN(X), w\in W_{G,\theta}} a_{E,w}wE|Z$,
$D_{2}=\sum_{E\in \calN(X), w\in W_{G,\theta}} b_{E,w}wE|Z$. Then $D_{2}\neq0$ and
there is a strictly negative $a_{E,w}$. Notice that there is
$(E,w')$ such that $b_{E,w'}\leq0$ because $D_{2}$ is principal.
Thus $a_{E,w'}+b_{E,w'}=a_{E,w}+b_{E,w'}<0$, a contradiction. Thus
there is $M\in Div^{G}(X)_{\mathbb{Q}}$ with $i^{*}M=M'$ and
$mD=A+M$. Moreover, the coefficient of $M$ with respect to any $E$
is equal to the coefficient of $M'$ with respect to $E|Z$, which we
know to be positive.  \end{proof}

\

\begin{rem}\label{rem: controes big toroid} If $D$ is in the kernel of $i^{*}$, then it is not
big. Furthermore, if $\rho$ is not injective,  there are non-big
divisors $D$ with $i^{*}(D)$ big. Suppose by simplicity $X$ toroidal
and let $\alpha_{i}^{\vee}\notin\  \rho(D(G/H)^{H})$, then
$\sum_{j\neq
i}F_{\alpha_{j}}+3F_{\alpha_{i}}^{+}-F_{\alpha_{i}}^{-}$ is not big,
but it is equal to $\sum_{j}F_{\alpha_{j}}+
2(F_{\alpha_{i}}^{+}-F_{\alpha_{i}}^{-})$, where $\sum_{j}F_{j}$ is
big and $(F_{\alpha_{i}}^{+}-F_{\alpha_{i}}^{+})\in ker\, i^{*}$.
\end{rem}

\

Let $D$ be a $B$-stable nef Cartier divisor on a projective symmetric variety. By  Remark \ref{rem: riduzione a G stable}, we can write $D\sim D_{1}+D_{2}$ so that:
i) $D_{1}$ is  $G$-stable, effective and nef; 2) $D_{2}$ is a positive linear combination of the $F_{\alpha}^{+}$, up to exchanging some
$F_{\alpha}^{+}$ with the corresponding $F_{\alpha}^{-}$. Moreover $D$ is big if and only if $D_{1}$ is big. Remark, however, that the previous choice of $F_{\alpha}^{+}$ in $\rho(\alpha^{\vee})$  depends on $D$ and that, if $\rho$ is not injective, we can always find another nef divisor for which such choice does not hold, for example $D_{1}+F_{\alpha}^{-}$.
Thus, to study the bigness of any fixed $B$-stable nef Cartier divisor, we can reduce ourselves to study the bigness of an opportune $G$-stable nef Cartier divisor. Observe that if none  $F_{\alpha}^{+}$  belongs to $\calF(X)$ (for example, if $X$ is toroidal), then $h^{D_{1}+D_{2}}=h^{D_{1}}$.
We say that $D$ satisfies (*) if it is equal to  $D_{1}+D_{2}$, with $ D_{1}$, $D_{2}$ as before. Remark the, given any $B$-stable and nef $D$, we can rename the $F_\gra^+$ so that $D$ satisfies $(*)$.

\begin{thm}\label{bigness e nef} Let $X$ be a projective symmetric variety. Let $D$ be a
nef, $B$-stable Cartier $\mathbb{Q}$-divisor on $X$ which satisfies $(*)$ and let $h$  be
the   piecewise linear function over the support of the
colored fan associated to $D_1$.
Write $h=\{h_{\calC}\}_{\{(\calC,\calF)\}}$ where the $\{(\calC,\calF)\}$
are the maximal colored cones. Then $D$ is big if and only if $(\sum
h_{\calC},R^{\vee})\neq0$ for  each irreducible factor
$R^{\vee}$ of $R_{G,\theta}^{\vee}$. Moreover, if $X$ is toroidal or if $D$  is $G$-stable, then $h$ is also the piecewise linear function  associated to $D$.
\end{thm}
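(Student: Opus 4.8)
The plan is to reduce the question to a positivity-of-volume statement on a toric variety and then translate that into the stated orthogonality condition, the key input being that nefness forces the relevant linear pieces to have a common sign. By Remark \ref{rem: riduzione a G stable} I may replace $D$ by $D_1$: since $D$ is big if and only if $D_1$ is big and $h=h^{D_1}$, it suffices to prove the criterion for the $G$-stable nef divisor $D_1$, which I henceforth call $D$. Using Remark \ref{rem: rid a toroid per big} I would further assume $X$ smooth and toroidal. Now $D$ is $G$-stable, so Proposition \ref{bigness of toroidal} gives that $D$ is big if and only if its restriction $\calO(D)|Z^{c}$ to the associated complete toric variety $Z^{c}$ is big. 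As $D$ is nef, $\calO(D)|Z^{c}$ is a nef $T$-stable divisor, so by Proposition \ref{cns D big e nef} it is big if and only if $vol(\calO(D)|Z^{c})>0$; by the volume formula of \S\ref{sez: picard}, $vol(\calO(D)|Z^{c})=(rank\,G/H)!\,vol(Q(D))$. Hence $D$ is big if and only if the polytope $Q(D)=convex(wh_{\calC})$ is full-dimensional in $\chi(S)_{\mR}$.

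It then remains to characterise full-dimensionality of $Q(D)$ combinatorially. I would use the orthogonal decomposition $\chi(S)_{\mR}=\bigoplus_{i}V_{i}$ into the spans $V_{i}$ of the irreducible factors $R_{i}$ of $R_{G,\theta}$; correspondingly $W_{G,\theta}=\prod_{i}W_{i}$, where $W_{i}$ is the Weyl group of $R_{i}$, acting on $V_{i}$ as its (irreducible) reflection representation and trivially on the remaining factors. Since $Q(D)$ is $W_{G,\theta}$-invariant, it is full-dimensional if and only if each orthogonal projection $\mathrm{proj}_{V_{i}}Q(D)$ is full-dimensional in $V_{i}$. Because projection is linear and commutes with the factorwise action, $\mathrm{proj}_{V_{i}}Q(D)=convex(w_{i}\,\mathrm{proj}_{V_{i}}h_{\calC}:w_{i}\in W_{i})$ is the convex hull of a union of $W_{i}$-orbits. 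As $W_{i}$ acts irreducibly on $V_{i}$ and the centroid of any $W_{i}$-orbit is $W_{i}$-fixed, hence $0$, the affine hull of such a convex hull equals the linear span of the orbit; therefore $\mathrm{proj}_{V_{i}}Q(D)$ is full-dimensional if and only if $\mathrm{proj}_{V_{i}}h_{\calC}\neq0$ for at least one maximal cone $\calC$.

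The final, and most delicate, step is to recognise that this last condition is exactly $(\sum_{\calC}h_{\calC},R_{i}^{\vee})\neq0$, and here nefness is essential. Since $D$ is nef, each $-h_{\calC}$ is a dominant spherical weight, so every $h_{\calC}$ is antidominant and $(h_{\calC},\alpha^{\vee})\leq0$ for every simple coroot $\alpha^{\vee}$. Consequently the terms $(h_{\calC},\alpha^{\vee})$ all have the same sign, so no cancellation can occur: $(\sum_{\calC}h_{\calC},\alpha^{\vee})=0$ forces $(h_{\calC},\alpha^{\vee})=0$ for every $\calC$. Applying this to each simple coroot of $R_{i}$ shows that $\sum_{\calC}h_{\calC}\perp R_{i}^{\vee}$ if and only if every $h_{\calC}$ is orthogonal to $V_{i}$, i.e.\ $\mathrm{proj}_{V_{i}}h_{\calC}=0$ for all $\calC$. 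Negating and combining over all irreducible factors yields the stated criterion.

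I expect this no-cancellation argument to be the crux: it is what converts a statement about the sum $\sum_{\calC}h_{\calC}$ into a statement about each individual slope $h_{\calC}$, and it simultaneously shows that the condition is a property of the convex function $h$ alone, independent of the subdivision chosen in the reduction to a smooth toroidal model, so that passing to that model is harmless. The remaining bookkeeping is routine, as is the \lq\lq moreover" clause, which follows because $h^{D_{1}+D_{2}}=h^{D_{1}}$ whenever $X$ is toroidal (so $\calF(X)=\vuoto$) or $D_{2}=0$.
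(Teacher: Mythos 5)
Your argument is correct, and its first half coincides with the paper's: reduce to the $G$-stable part $D_1$ (Remark \ref{rem: riduzione a G stable}), then to a smooth toroidal model (Remark \ref{rem: rid a toroid per big}), restrict to $Z^c$ (Proposition \ref{bigness of toroidal}), and convert bigness of the resulting nef toric divisor into $vol(Q(D))>0$ (Proposition \ref{cns D big e nef}). The divergence is in how full-dimensionality of $Q(D)$ is detected. The paper uses the same no-cancellation observation in the ``only if'' direction, but proves the ``if'' direction dynamically: starting from the barycenter $v_1=\frac{1}{n}\sum h_\calC\in Q(D)$ it repeatedly replaces $v_k$ by a convex combination of $v_k$ and a reflection $s_\gra v_k$, enlarging at each step the set of simple restricted roots not orthogonal to $v_k$ (connectedness of the Dynkin diagrams of the irreducible factors supplies the root $\gra$), until it reaches a strictly antidominant point $v_m\in Q(D)$, whose orbit hull is already full-dimensional. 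You replace this iteration by the orthogonal decomposition $\chi(S)_\mR=\bigoplus V_i$ and a projection/centroid argument; this is more structural, makes visible why the criterion factors over the irreducible components, and your remark that no-cancellation turns the criterion into a statement about the \emph{set} of linear pieces of $h$ (hence subdivision-invariant) is in fact a more careful justification of the reduction step than the paper's assertion that ``$\sum h_\calC$ doesn't change'' under decoloration and resolution (as a sum it does change, since cones get subdivided; only its non-vanishing against each $R_i^\vee$ is stable). One step you assert without proof: that $W_{G,\theta}$-invariance of $Q(D)$ makes full-dimensionality equivalent to full-dimensionality of all projections $\mathrm{proj}_{V_i}Q(D)$. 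The converse implication is false for general convex sets (a diagonal line in $\mR^2$ has surjective projections), so invariance must actually be used: for $q\in Q(D)$, the average of the orbit of $q$ under $\prod_{j\neq i}W_j$ equals $\mathrm{proj}_{V_i}q$, so $\mathrm{proj}_{V_i}Q(D)\subset Q(D)$; moreover $0$ lies in the interior of $\mathrm{proj}_{V_i}Q(D)$ relative to $V_i$ (take the centroid of the $W_i$-orbit of an interior point, which is $W_i$-fixed and hence $0$), so $Q(D)$ contains a neighbourhood of the origin in each $V_i$ and therefore a neighbourhood of the origin in $\chi(S)_\mR$. With that averaging sentence added, your proof is complete.
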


The idea of the proof is the following: first, we reduce to study the pullback of $\calO(D)$ to $Z^c$ by the previous proposition. Then, we study
$vol(W_{G,\theta}\cdot(\sum h_{\calC}))$ to verify if such line bundle is big.

\begin{proof} We can suppose $X$ toroidal and  $\mathbb{Q}$-factorial. Let $s$ be the rank of $G/H$ (equal to the dimension of $Z^c$). In this way the support of $h$ can change, but the weight $\sum
h_{\calC}$ doesn't change. Moreover, we can suppose that $D$ is $G$-stable by Remark \ref{rem: riduzione a G stable}.
By Proposition \ref{bigness of toroidal},  $D$ is big if and only if $i^{*}(D)$ is big, where $i:Z^{c}\rightarrow X$ is the inclusion. Observe
that $i^{*}(D)$ is nef and globally generated. Denoted by $Q(D)$  the polytope
$convex(W_{G,\theta}\{h_{\calC}\})$ (as in \S\ref{sez: picard}),
$i^{*}(D)$ is big if and only if $vol(i^{*}(D)):=i^{*}(D)^{s}=s!\,vol(Q(D))$ is strictly
positive. Notice that the $h_{\calC}$ are antidominant because $D$ is
nef and $G$-stable.

First, suppose that $D$ is big and suppose by contradiction that
there is an irreducible factor $R^{\vee}$ of $R_{G,\theta}^{\vee}$
such that $(\sum h_{\calC},R^{\vee})=0$. Then $(h_{\calC},R^{\vee})=0$ for
each $\calC$ because the $h_{\calC}$ are antidominant. Let $\alpha^{\vee}$
be any simple coroot in $R^{\vee}$, then $(wh_{\calC},\alpha^{\vee})=0$
for all $\calC$ and for all $w\in W_{G,\theta}$.
So $Q(D)$ is contained in the hyperplane $(\,\cdot,\alpha^{\vee})=0$,
thus it has volume 0.

Now, suppose that it is verified the condition $(\sum
h_{\calC},R^{\vee})\neq0$  for each irreducible factor $R^{\vee} $ of $R_{G,\theta}^{\vee}$. Let $s$ be rank of $X$ and let $n$ be the number
of $s$-dimensional colored cones, then $v_{1}=\frac{1}{n}\sum_{(\calC,\calF):\, dim\,\calC=s} h_{\calC}$ belongs
to $Q(D)$, so $convex(W_{G,\theta}v)\subset Q(D)$.
Thus it is sufficient to prove that $vol(convex(W_{G,\theta}v_1))>0$.
Write $\overline{R}_{G,\theta}=I_{1}\sqcup J_{1}$ with
$(v_{1},\alpha)\neq0$ if and only if  $\alpha\in I_{1}$. If $J_{1}$ is not empty, then, by hypothesis,  there are $\alpha\in I_{1}$ and $\beta\in J_{1}$ such
that $(\alpha,\beta)\neq0$. Thus
$(s_{\alpha}v_{1},\beta)=
-(\beta,\alpha^{\vee})(v_{1},\alpha)<0$; moreover $s_{\alpha}v_1\in Q(D)$.
Hence $v_{2}=\frac{2}{3}v_{1}+\frac{1}{3}s_{\alpha}v_{1}$ is
antidominant and belongs to $convex(W_{G,\theta}v_1)$. Write
$\overline{R}_{G,\theta}=I_{2}\sqcup J_{2}$ with
$(v_{2},\alpha)\neq0$ if and only if $\alpha\in I_{2}$.
Notice that
$I_{1}\subsetneq I_{2}$; in particular $J_{2}$ contains no irreducible factor of $R_{G,\theta}$. By induction we can find $m$ such that
$\overline{R}_{G,\theta}=I_{m}$ and $v_{m}$ belongs to
$convex(W_{G,\theta}v_1)$; in particular $v_{m}$ is strictly
antidominant. So $vol(convex(W_{G,\theta}v_{1}))\geq
vol(convex(W_{G,\theta}v_{m}))>0$. \end{proof}

Remark that, if $(G,\theta)$ is indecomposable, we obtain that any non-trivial, $G$-stable, nef,
Cartier $\mathbb{Q}$-divisor is big. This fact can also be proved directly, because in this case the morphism associated to the divisor has to be birational.

%

\section{Final remarks}\label{sez: conclusion}

The results of this work cannot be extended to a general spherical
variety. In particular, Theorem \ref{eff cone1} 
is false if $H$ has infinite index in $N_{G}(H)$, i.e. $G/H$ is not
sober. This means that the valuation cone $cone(\calN)$ is not strictly
convex. First, the class of two distinct $G$-stable prime divisors
can generate the same halfline. For example
$\mathbb{P}^{1}=\mathbb{C}^{*}\cup\{0\}\cup\{\infty\}$, seen as
toric variety, has two $G$-stable prime divisors and Picard number
1.  Furthermore, it is not clear how to extend the Theorem \ref{eff
cone2} to the horospherical varieties. Consider for example
$\mathbb{P}^{2}=\mP(\mC^2\oplus\mC)$ as completion of $SL_{2}/U$, where $U$ is the group
of upper triangular matrices with diagonal entries equal to 1.  This
variety has one color, one $G$-stable divisor, rank 1 and Picard
number 1. In particular, one can show that, given an equivariant
morphism $SL_{3}/U\rightarrow SL_{3}/H$ onto a spherical space,  $H$
is $U$, $SL_{3}$, a Borel subgroup or the semidirect product of $U$ with a cyclic group (indeed, there is a Borel subgroup such that $U=[B,B]\subset H\subset B$ and $B/U\cong \textbf{k}^*$). We have take these examples from \cite{Br07}, $\S4.1$.


Also the Theorem \ref{bigness e nef} does not hold for a generic spherical variety.  We can generalizes it in two ways: i) substituting $R_{G,\theta}$ with the spherical root system; ii) substituting $R_{G,\theta}^{\vee}$ with the image of colors. But,  in the first case  the horospherical varieties have  not spherical roots and the statement would be trivially satisfied by all the divisors. In the second case, the flag manifolds have rank zero, so the image of $\rho$ has to be 0. Therefore none divisor can satisfy the statement.  We hope that the    Theorem \ref{bigness e nef} can be generalized to any sober spherical variety. To this aim, it would be useful to prove such theorem using the definition of big divisors based on  $dim H^{0}(X,mL)$. Unfortunately, we  have not succeeded in doing it,  even for the symmetric case.

\section*{Acknowledgments}

The author has been  supported by the M.P.I.M of Bonn.


\begin{thebibliography}{ }


\bibitem[Bi90]{Bi90}E~Bifet, \newblock {\em On complete symmetric varieties}, \newblock Adv. Math. \textbf{80} (1990), no. 2, 225-249.

\bibitem[Br89]{Br89} M.~Brion, \newblock {\em Groupe de Picard et nombres caract\'{e}ristiques des vari\'{e}t\'{e}s sph\'{e}rique}, \newblock
Duke Math. J. \textbf{58}, (1989), no. 2, 397-424.

\bibitem[Br93]{Br93} M.~Brion, \newblock {\em Variétés sphériques et théorie de Mori}, \newblock Duke Math. J. \textbf{72}, (1993), no. 2, 369-404.

\bibitem[Br97a]{Br97} M.~Brion, \newblock {\em Vari\'{e}t\'{e}s sph\'{e}riques, Notes de cours}, \newblock 1997.

\bibitem[Br97b]{Br97b} M.~Brion, \newblock {\em Curves and divisors in spherical
varieties, in Algebraic groups and Lie Groups}, \newblock Austral. Math.
Soc. Lect. Ser. \textbf{9}, (1997),  21-34.

\bibitem[Br07]{Br07} M.~Brion, \newblock {\em The total coordinate ring of a wonderful variety}, \newblock  J. Algebra. \textbf{313}, (2007), no. 1, 61-99.

\bibitem[ChMa03]{CM} R.~Chiriv\`{\i}  and A.~Maffei, \newblock {\em The ring of sections of a complete symmetric variety}, \newblock J.
Algebra, \textbf{261}, (2003), no. 2, 310-326.

\bibitem[dCoPr83]{dCP1}  C.~De Concini and C.~Procesi, \newblock {\em Complete symmetric varieties, in Invariant Theory}, \newblock
Lecture notes in Math. \textbf{996},  1-44, Springer-Verlag, New York, 1983.


\bibitem[FS08]{FS} O.~Fujino and H.~Sato, \newblock {\em Smooth projective toric varieties whose nontrivial nef line bundles are
big}, \newblock Proc. Japan Acad. Ser. A Math. Sci. \textbf{85}, (2009), no. 7, 89-94.



\bibitem[Kn91]{Kn91} F.~Knop, \newblock {\em The Luna-Vust theory of spherical embedding,
in    Proceeding of the Hyderabad Conference on Algebraic
Groups}, \newblock Manoj-Prakashan, Madras, (1991),  225-249.

\bibitem[Le97]{Le97} C.~W.~Lee, \newblock {\em Subdivisions and triangulations of polytpes,
in    Handbook of discrete and computational geometry}, \newblock CRC Press Ser. Discrete Math.
Appl., CRC, Boca Raton, FL,  (1997),  271-290.

\bibitem[La04]{La04} R.~Lazarsfeld, \newblock {\em Positivity in algebraic geometry. I.
Classical setting: line bundles and linear series}, Ergebnisse der Mathematik und ihrer Grenzgebiete. 3.  \newblock Folge. A Series of
 Modern Surveys in Mathematics [Results in Mathematics and Related Areas. 3rd
 Series. A Series of Modern Surveys in Mathematics], 48. Springer-Verlag, Berlin,  2004.

\bibitem[LuVu83]{LV} D.~Luna and T.~Vust, \newblock {\em Plongements d'espaces homog\`{e}nes}, \newblock Comment. Math. Helv. \textbf{58} (1983), no. 2, 186-245.

\bibitem[Od88]{Od88} T.~Oda, \newblock {\em Convex bodies and algebraic geometry}, \newblock Ergeb. Math., 3. Folge, vol. 15, \textbf{9}, Berlin Heidelberg New York: Springer (1988).


\bibitem[Ru07]{Ru2} A.~Ruzzi, \newblock {\em Smooth projective symmetric varieties
with Picard number equal to one}, \newblock  to appear on   International
Journal of Mathematics, see also arXiv: math.AG/0702340.

%

\bibitem[T06]{T06}D.~A.~Timashev, \newblock {\em Homogeneous spaces and equivariant embeddings}, \newblock
preprint, 2006,  arXiv: math.AG/0602228.

\bibitem[Vu74]{Vu74} T.~Vust, \newblock {\em Op\'{e}ration de groupes r\'{e}ductifs dans un type de c\^{o}nes presque
homog\`{e}nes}, \newblock Bull. Soc. Math. France.  \textbf{102} (1974),
317-333.

\bibitem[Vu90]{V90} T.~Vust, \newblock {\em Plongements d'espace sym\'{e}triques alg\'{e}briques: une classification},
\newblock Ann. Scuola Norm. Pisa Cl. Sci. (4) \textbf{17}, (1990), n. 2, 165-195.

\end{thebibliography}
\end{document}